\documentclass{amsart}

\usepackage{amsmath}
\usepackage{amssymb, mathrsfs}
\usepackage{amsthm}
\usepackage[onehalfspacing]{setspace}

\newtheorem{Thm}{Theorem}[section]
\newtheorem{Lem}[Thm]{Lemma}

\theoremstyle{definition}
\newtheorem{Def}[Thm]{Definition}

\theoremstyle{remark}
\newtheorem{Rmk}[Thm]{Remark}

\newcommand{\Rn}{\mathbb{R}^n}
\newcommand{\R}{\mathbb{R}}
\newcommand{\cexp}{exp_x^c}
\newcommand{\cexpz}{exp_{x_0}^c}

\newcommand{\cone}[2]{\mathcal{C}_{#1,#2}}
\newcommand{\icone}[3]{\overline{\mathcal{C}}_{#1, #2}(#3)}
\newcommand{\Int}[1]{\mathrm{Int}(#1)}
\newcommand{\dist}[2]{\mathrm{dist}(#1,#2)}

\newcommand\numberthis{\addtocounter{equation}{1}\tag{\theequation}}

\title[]{Synthetic MTW conditions and their equivalence under mild regularity assumption on the cost function}

\author{Seonghyeon Jeong}

\date{}

\begin{document}

\begin{abstract}
Loeper's condition in \cite{Loe09} and the quantitatively quasi-convex condition (QQconv) from \cite{GK15} are synthetic expressions of the analytic MTW condition from \cite{TW} since they only require $C^2$ differentiability of the cost function $c$. When the cost function $c$ is $C^4$, it is known that the two synthetic MTW conditions are equivalent to the analytic MTW condition. However, when the cost function has regularity weaker than $C^4$, it is not known that if the two synthetic MTW conditions are equivalent. In this paper, we show the equivalence of the synthetic MTW conditions when the cost function has only $C^2$ regularity.
\end{abstract}
\allowdisplaybreaks 

\maketitle

\section{introduction}
In this paper, we will consider the MTW conditions which are necessary and sufficient conditions for the H\"older regularity of solutions to the Monge-Amp\`ere type equations which arise from the optimal transportation problems. The Monge-Amp\`ere type equation is a fully non-linear degenerate elliptic partial differential equation of the form
\begin{equation}
\label{MA}
\det( D^2 u - \mathcal{A}(x, Du) ) = \phi(x, Du),
\end{equation}
where $\mathcal{A} (x,p)= -D^2_{xx}c (x, \cexp ( p) )$ is a matrix valued function defined using the cost function from the optimal transportation problem (see Definition \ref{Def : cexp}) and $\phi : X \times \Rn \to \R$ depends on the cost function and the given data of the optimal transportation problem. The MTW condition is a condition on a (2,2) tensor called the MTW tensor which first appeared in \cite{MTW}. The MTW tensor is defined using the cost function $c$ from the optimal transportation problem :
\begin{displaymath}
MTW = D^2_{pp} \mathcal{A}(x,p).
\end{displaymath} 
In \cite{TW}, the following condition had to be assumed to show the regularity of the solution to the optimal transportation problem : for any $\xi, \eta \in \Rn$ with $\xi \perp \eta$, we have
\begin{equation}
\label{wMTW}
MTW[\xi,\xi, \eta, \eta] = D^2_{pp} \mathcal{A} (x,p) [\xi, \xi, \eta, \eta] \geq 0.
\end{equation}
This condition is called the MTW condition or (A3w). When we have strict inequality in the inequality \eqref{wMTW}, the condition is called (A3s). (A3w) and (A3s) conditions are used to show many regularity results such as H\"older regularity of solutions to \eqref{MA} in \cite{GK15}, \cite{Loe09}, \cite{FKM13} and Sobolev regularity in \cite{PF13} . Moreover, It is proved in \cite{Loe09} that the MTW condition is a necessary and sufficient condition for the H\"older regularity of the solutions to \eqref{MA} when the cost function is $C^4$. 

In \cite{Loe09} and \cite{GK15}, the MTW condition is used to show the H\"older regularity results, but it is used in slightly different forms. It was shown in \cite{Loe09} that the MTW condition is equivalent to \emph{Loeper's condition} (See Definition \ref{Loeper's condition}). On the other hand, in \cite{GK15}, the MTW condition is used in the form of \emph{quantitatively quasi-convexity} (QQconv) (See Definition \ref{QQconvex}). The cost function does not have to be $C^4$ to satisfy Loeper's condition and QQconv. Therefore, we can say that Loeper's condition and QQconv are synthetic MTW conditions and (A3w) is an analytic MTW condition.

In the papers \cite{Loe09} and \cite{GK15}, it is shown that the two conditions, Loeper's condition and quantitatively quasi-convex, are shown to be equivalent to the MTW condition \eqref{wMTW} under the assumption that the cost function is $C^4$. In addition, in the recent work of G. Loeper and N. Trudinger \cite{LT20}, it is shown that a condition called (A3v), which is slightly weaker than Loeper's condition, is still equivalent to the MTW condition when the cost function is $C^4$.  

It is unknown that if the synthetic MTW conditions are equivalent under milder regularity assumption on the cost function (when the cost function is not $C^4$). In this paper, we show that the synthetic MTW conditions are equivalent under $C^2$ regularity assumption on the cost function $c$. The main theorem of this paper is the following :

\begin{Thm}[Main Theorem]
\label{MainThm}
Let $X$ and $Y$ be compact subsets of $\Rn$, and let $c: X \times Y \to \R$ be a measurable function called the cost function. Assume that $c$ is twice differentiable in the sense that there exist the mixed Hessian $D^2_{xy} c$ and $D^2_{yx} c$, and the mixed Hessians are transpose of each other $D^2_{xy} c^T = D^2_{yx} c$. Suppose the conditions (Twisted), (Twisted*), (Non-degenerate), (cDomConv), (cDomConv*) hold. Then if $c$ satisfies Loeper's condition, then $c$ satisfies QQconv.
\end{Thm}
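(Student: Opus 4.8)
The plan is to reduce both synthetic conditions to one-dimensional statements about a scalar function along straight segments, and then to upgrade the merely qualitative maximum principle encoded by Loeper's condition to the quantitative inequality of QQconv, using the Lipschitz bound on the mixed Hessian as a substitute for the fourth-order Taylor expansions on which the classical $C^4$ arguments rely.

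First I would fix $x_0 \in X$ together with a base point in $Y$ and introduce coordinates of Kim-McCann type: by (Twisted), (Non-degenerate) and (cDomConv) the map $q \mapsto y = \cexpz(q)$ is, in a suitable chart, a homeomorphism from a convex set onto $Y$, and in the $q$-variable the $c$-segments with respect to $x_0$ are precisely the affine segments $q_s = (1-s)q_0 + sq_1$; symmetrically, (Twisted*), (Non-degenerate) and (cDomConv*) provide the dual coordinate on $X$, which is what allows the final estimate to be taken uniform in $x \in X$. In these coordinates both conditions become assertions about the scalar function $f(s) := -c\bigl(x, \cexpz(q_s)\bigr) + c\bigl(x_0, \cexpz(q_s)\bigr)$, suitably affinely normalised, as the segment $\{q_s\}$ and the point $x$ vary: Loeper's condition (Definition \ref{Loeper's condition}) says that $f$ is quasi-convex, $f(s) \le \max\{f(0), f(1)\}$, along every sub-segment, while QQconv (Definition \ref{QQconvex}) additionally demands a quantitative separation between the interior values of $f$ and its endpoint values.

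The analytic core is then the following. Because $D^2_{xy}c$ is Lipschitz --- and $(D^2_{xy}c)^T = D^2_{yx}c$, while (Non-degenerate) controls the change of coordinates --- the map $s \mapsto f(s)$ is $C^{1,1}$ on $[0,1]$, the Lipschitz constant of $f'$ being controlled by that of the mixed Hessian times $|q_1 - q_0|$. Hence $f$ obeys the exact integral form of Taylor's theorem, $f(s) = f(0) + f'(0)s + \int_0^s (s-\tau)f''(\tau)\,d\tau$ with $f'' \in L^\infty$, and this stands in for the $C^4$ expansion. Applying Loeper's condition to the family of short sub-segments $[s-h, s+h]$ and letting $h \to 0$ yields, for a.e.\ $s$, a sign condition on $f''(s)$ in the directions transverse to the segment --- the measurable analogue of the pointwise inequality \eqref{wMTW}. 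I would then feed this a.e.\ inequality back into the integral identity, use Fubini to handle the null sets that move with the transverse direction, and by a dyadic subdivision together with a telescoping estimate convert it into the quantitative inequality of QQconv, the error terms being absorbed by the Lipschitz constant of $D^2_{xy}c$.

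The step I expect to be the main obstacle is exactly this last conversion. In the $C^4$ setting one simply Taylor-expands $f(s) \le \max\{f(0), f(1)\}$ to fourth order in the transverse variable and reads off the sign of the MTW tensor; here the transverse second derivative exists only almost everywhere, so one must (i) show that Loeper's condition still forces the correct a.e.\ sign even though the segment direction and the transverse direction are coupled through the $c$-exponential, (ii) control the first-order cross terms, which vanish by symmetry when $c$ is smooth but here vanish only up to an error governed by the Hessian's modulus of continuity, and (iii) pass from an a.e.\ pointwise inequality to the integrated quantitative statement without losing the constant. Bookkeeping these error terms, and checking that Lipschitz (rather than merely continuous) control on $D^2_{xy}c$ suffices to absorb them, is the heart of the argument.
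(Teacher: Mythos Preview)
Your proposal has a genuine gap at its analytic core. You reduce everything to a one-variable function $f(s)$ along a fixed segment and then claim that applying Loeper's condition to sub-segments $[s-h,s+h]$ and letting $h\to 0$ yields ``a sign condition on $f''(s)$ in the directions transverse to the segment''. But $f$ is a scalar function of one real variable; there is no transverse direction, and Loeper's condition on sub-segments says nothing beyond what it already says on $[0,1]$ --- namely that $f$ is quasi-convex. Quasi-convexity of a $C^{1,1}$ scalar function does \emph{not} yield $f''\ge 0$ a.e.\ (think of $f(s)=-s^2$, which is quasi-convex on any interval). The only pointwise information one can extract is that $f''(s_0)\ge 0$ at points where $f'(s_0)=0$, and turning that into the quantitative inequality \eqref{QQconvex ineq 2} is precisely the hard step; ``dyadic subdivision and telescoping'' is not an argument here. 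The reference to \eqref{wMTW} suggests you are conflating the segment variable with the $x$-variable, but fixing the segment and $x$ kills exactly the $\xi\perp\eta$ structure that makes (A3w) meaningful.

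The paper's approach is quite different and genuinely higher-dimensional. It never tries to recover an a.e.\ curvature sign. Instead it works with the full function $F$ on the convex set $Y^*_{x_0}$, so that Loeper's condition becomes convexity of the sublevel sets $SL_{v_0}=\{F\le F(v_0)\}$. The Lipschitz-Hessian hypothesis is used in Lemma~\ref{Lem : Lip grad F} to prove
\[
|\nabla F(v_1)-\nabla F(v_0)|\le C\,|x_1-x_0|\,|v_1-v_0|,
\]
the point being that the Lipschitz constant of $\nabla F$ scales with $|x_1-x_0|\sim|\nabla F|$, so on balls of a \emph{uniform} radius $r_k$ the direction of $\nabla F$ is nearly constant. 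The local QQconv estimate is then obtained by a cone decomposition: if $v_1-v_0$ makes a definite angle with the level set (i.e.\ $v_1\in\cone{k}{v_0}$), a direct gradient integration suffices (Lemma~\ref{Lem : QQconv on sec cone}); if $v_1-v_0$ is nearly tangent, one drops from each $v_t$ to the level set $L_{v_0}$ along a fixed direction and uses convexity of $\partial SL_{v_0}$ to show the drop distance $s_t$ is convex in $t$ (Lemma~\ref{Lem : local QQconv}). A separate argument (Lemmas~\ref{Lem : concave method}--\ref{Lem : near the boundary}) handles the case $v_0$ near $\partial Y^*_{x_0}$, where one must instead go \emph{up} to the level set $L_{v_1}$ and exploit concavity of the corresponding $s_t$. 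Finally Lemmas~\ref{local QQconv implies global}--\ref{Lem : QQconv on half ball to global} globalise. None of this is visible from the one-dimensional restriction you propose; the convexity of sublevel sets in $Y^*_{x_0}$ is the mechanism that replaces the $C^4$ Taylor expansion, and your outline discards it.
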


The conditions that we use in the main theorem will be explained in section \ref{Assumptions}. In section \ref{Synthetic MTW}, we define the synthetic MTW type conditions : Loeper's condition and QQconv, and prove some useful lemmas. In section \ref{IntQQconv}, we show that if the cost function satisfies Loeper's condition, then the cost function should satisfy QQconv away from the boundary $\partial Y$. In section \ref{NearBdy}, we show that the cost function satisfying Loeper's condition should satisfy QQconv even near the boundary $\partial Y$, and prove the main theorem. In the final section Section \ref{counter ex}, we construct a counter example when we do not have (Non-degenerate) condition.

We close this section by introducing some results about the MTW conditions. As mentioned, the MTW tensor was first introduced in \cite{MTW}. The synthetic MTW conditions were appear in \cite{Loe09} and \cite{GK15}. In \cite{KM10}, it was shown that the MTW tensor is related to the sectional curvature of the manifold $X \times Y$ with a metric called Kim-McCann metric. Another result that is similar to \cite{KM10} can be found in \cite{GJ20} where some K\"ahler metric on $TM$ was used to show that the MTW tensor is related to the curvature of the  K\"ahler manifold $TM$. Moreover, in \cite{Rankin}, alternative formulation of the MTW condition in terms of monotonicity was established.

\section{Assumptions on the cost function and domains}
\label{Assumptions}
Let $X$ and $Y$ be compact subsets of $\Rn$ with non-empty interiors. Let $c : X \times Y \to \R$ be a function that is $C^2$ up to the boundary in the sense that there exist the mixed Hessian $c_{xy}$ and $c_{yx}$ that satisfy $D^2_{x_i y_j}c = D^2_{y_jx_i}c$. We impose the following conditions on the cost function $c$ : \\

\begin{tabular}{c p{8cm}}
(Twisted) & $-D_x c(x, \cdot ) : Y \to T^*_x X$ is injective for any $x \in X$. \\
(Twisted*) & $-D_y c( \cdot, y) : X \to T^*_y Y$ is injective for any $y \in Y$. \\
(Non-degenerate) & The mixed Hessian $D^2_{xy}c(x,y)$ is invertible for any $(x,y) \in X \times Y$.
\end{tabular}\\

These conditions are commonly used in many optimal transportation problems. The conditions (Twisted) and (Twisted*) imply the existence of inverse functions of $-D_x c(x, \cdot)$ and $-D_y c(\cdot, y)$. We name these inverse functions $c$-exponential map and $c^*$-exponential map respectively.

\begin{Def}
\label{Def : cexp}
Let $x \in X$ and $y \in Y$. We denote the image of $Y$, $X$ under the map $-D_x c( x, \cdot )$ and $-D_y c(\cdot, y)$ by $Y^*_{x}$ and $X^*_{y}$ respectively :
\begin{displaymath}
Y^*_{x} = -D_x c(x, Y), \ X^*_y = -D_y c(X,y).
\end{displaymath}
We define \emph{$c$-exponential map} $\cexp : Y^*_x \to Y$ and \emph{$c^*$-exponential map} $exp^{c^*}_y : X^*_y \to X$ by the inverse function of $-D_x c( x , \cdot )$ and $ -D_y c(\cdot, y)
$ respectively :
\begin{displaymath}
-D_x c(x, \cexp(p) ) = p, \ -D_y c( exp^{c^*}_y (q), y ) = q.
\end{displaymath}
\end{Def}

Since the cost function has a continuous mixed Hessian $D^2_{xy}c$ on the compact set $X \times Y$, it has a modulus of continuity $\omega$
\begin{equation*}
    \| D^2_{xy}c(x_1,y_1) - D^2_{xy}c(x_0, y_0) \| \leq \omega(| (x_1, y_1) - (x_0, y_0)|).
\end{equation*}
where $\| D^2_{xy} c \|$ is the Frobenius norm of $D^2_{xy} c$. Note that (Non-degenerate) condition with compactness of domain $X \times Y$ implies existence of some positive constant $\alpha$ such that
\begin{equation*}
\frac{1}{\alpha} \leq \| D^2_{xy} c \| \leq \alpha, \quad \frac{1}{\alpha} \leq \|(D^2_{xy}c)^{-1}\| \leq \alpha
\end{equation*}
Then the inverse of $D^2_{xy}c$, which exists due to the (Non-degenerate) condition, has a modulus of continuity $\alpha^2 \omega$
\begin{equation*}
    \begin{aligned}
        &\left\| \left( D^2_{xy}c(x_1, y_1) \right)^{-1}  - \left( D^2_{xy}c(x_0, y_1)\right)^{-1} \right\| \\ 
        =& \left\| \left( D^2_{xy}c(x_1, y_1) \right)^{-1} \right\| \left\| \left( D^2_{xy}c(x_0, y_1)\right)^{-1}\right\|  \left\|  D^2_{xy}c(x_1, y_1) -  D^2_{xy}c(x_0, y_1) \right\| \\
        \leq & \alpha^2 \omega ((x_1, y_1) - (x_0, y_0))
    \end{aligned}
\end{equation*}

Moreover, (Non-degenerate) condition implies that the functions $-D_x c(x, \cdot)$ and $-D_y c( \cdot , y)$ are bi-Lipschitz :
\begin{displaymath}
\frac{1}{\lambda} |y_1 - y_0| \leq |-D_x c(x, y_1) + D_x c(x, y_0) | \leq \lambda |y_1 - y_0|,
\end{displaymath}
\begin{displaymath}
\frac{1}{\lambda} |x_1 - x_0 | \leq |- D_y c(x_1, y ) + D_y c(x_0, y) | \leq \lambda |x_1 - x_0|.
\end{displaymath}
 
In addition to the conditions on the cost function $c$, we need to impose some geometric conditions on the domain $X$ and $Y$ : \\

\begin{tabular}{c p{8cm}}
($c$DomConv) & $Y^*_x$ is convex for any $x \in X$. \\
($c$DomConv*) & $X^*_y$ is convex for any $y \in Y$. 
\end{tabular}\\

In the rest of the paper, we always assume the conditions explained in this section hold.

\section{The synthetic MTW conditions}
\label{Synthetic MTW}
\begin{Def}
The MTW tensor of the cost function $c : X \times Y \to \R$ is a (2,2) tensor defined by
\begin{displaymath}
MTW = D^2_{pp} A(x,p) 
\end{displaymath}
where $A(x,p) = - D^2_{xx} c(x, \cexp(p))$. 
The cost function $c$ said to satisfy (\emph{A3w}) condition if
\begin{equation}
\label{A3w}
MTW[\xi, \xi, \eta, \eta] \geq 0
\end{equation}
holds for any $\xi \perp \eta$. If the cost function satisfy the inequality \eqref{A3w} with strict inequality, then the cost function is said to satisfy (\emph{A3s}).
\end{Def}

The following two conditions in next two definitions are equivalent to (A3w) when the cost function is $C^4$. See \cite{Loe09} and \cite{GK15} for the proofs. However, we do not need $C^4$ regularity for the definitions.

\begin{Def}
\label{Loeper's condition}
Let $x_0 \in X$ and $v_1, v_0 \in Y^*_{x_0}$, and let $v_t = (1-t)v_0 + tv_1, \forall t \in [0,1]$ and $f_t(x) = -c(x, \cexpz(v_t)) +c(x_0, \cexpz(v_t))$. The cost function $c$ is said to satisfy \emph{Loeper's condition} if it satisfies 
\begin{equation}
\label{Loeper's ineq}
f_t(x) \leq \max\{ f_1(x) , f_0(x) \}
\end{equation} 
for any $x, x_0 \in X$ and $v_1, v_0 \in Y^*_{x_0}$.
\end{Def}

\begin{Def}
\label{QQconvex}
Let $f_t$ be as in Definition \ref{Loeper's condition}. The cost function is said to be \emph{quantitatively quasi-convex}(QQconv) if there exists $M \geq 1$ such that 
\begin{equation}
\label{QQconvex ineq}
f_t(x) - f_0(x) \leq Mt (f_1(x) - f_0(x))_+
\end{equation}
holds for any $x, x_0 \in X$ and $v_1, v_0 \in Y^*_{x_0}$.
\end{Def}

\begin{Rmk}
It is easy to see that QQconv implies Loeper's condition. In fact, if $f_0(x) \geq f_1(x)$ then 
\begin{displaymath}
f_t(x) \leq Mt(f_1(x) - f_0(x))_+ + f_0(x) = f_0(x).
\end{displaymath}
If $f_0(x) < f_1(x)$, then set $v'_0 = v_1$ and $v'_1 = v_0$ and use \eqref{QQconvex ineq} like above.
\end{Rmk}

Hereafter, we use notation $v_t = (1-t)v_0 + t v_1$ and 
\begin{equation}
\label{Def : F}
F(v) = -c(x_1, \cexpz(v)) + c(x_0, \cexpz(v))
\end{equation}
Then \eqref{Loeper's ineq} becomes 
\begin{equation}
\label{Qconv F}
F(v_t) \leq \max\{ F(v_1), F(v_0) \}
\end{equation}
which is the quasi-convexity of the function $F : Y^*_{x_0} \to \R$.  \eqref{QQconvex ineq} can be expressed using $F$ as well :
\begin{equation}
\label{QQconvex ineq 2}
F(v_t) -F(v_0) \leq Mt( F(v_1) - F(v_0))_+.
\end{equation}

\begin{Rmk}
\label{positive case only}
Suppose the cost function satisfies Loeper's condition. If $F(v_1) \leq F(v_0)$, then Loeper's condition implies $F(v_t) \leq F(v_0)$ so that \eqref{QQconvex ineq 2} holds. Therefore, when we show the implication from Loeper's condition to QQconv, we only need to check the case when $F(v_1) > F(v_0)$.
\end{Rmk}

\begin{Rmk}
\label{t must small}
Suppose the cost function satisfies Loeper's condition. By Remark \ref{positive case only}, consider $F(v_1) > F(v_0)$. If we have opposite inequality of \eqref{QQconvex ineq 2} i.e.
\begin{displaymath}
F(v_t) - F(v_0) > Mt(F(v_1) - F(v_0)),
\end{displaymath}
then we get $t < \frac{1}{M}$. In fact, Loeper's condition implies
\begin{displaymath}
Mt(F(v_1) - F(v_0)) < F(v_t) - F(v_0) \leq F(v_1) - F(v_0).
\end{displaymath}
This shows $t < \frac{1}{M}$.
\end{Rmk}

The next lemma shows that, to prove 'Loeper's condition $\Rightarrow$ QQconv', we only need to verify \eqref{QQconvex ineq 2} locally. 

\begin{Lem}
\label{local QQconv implies global}
Suppose the cost function $c$ satisfies Loeper's condition. If there exists $r>0$ such that \eqref{QQconvex ineq 2} holds for any $x_0, x_1 \in X$ and  $v_0, v_1 \in Y^*_{x_0}$ with $|v_1 - v_0| \leq r$, then the cost function is QQconv.
\end{Lem}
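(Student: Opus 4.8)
The plan is to bootstrap from the local (small-$\lvert v_1-v_0\rvert$) inequality to the global one by a chaining/subdivision argument along the segment $t\mapsto v_t$. Fix $x_0,x_1\in X$ and $v_0,v_1\in Y^*_{x_0}$. By Remark \ref{positive case only} we may assume $F(v_1)>F(v_0)$, and by Remark \ref{t must small} we only need to establish \eqref{QQconvex ineq 2} for the constant $M$ we will produce; moreover by convexity of $Y^*_{x_0}$ (condition ($c$DomConv)) the whole segment $v_t$ lies in $Y^*_{x_0}$, so the local hypothesis applies to every subsegment. Let $L=\lvert v_1-v_0\rvert$ and pick an integer $N$ with $L/N\le r$; set $s_k=k/N$ and $w_k=v_{s_k}$, so consecutive $w_k$'s are within distance $r$. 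The strategy is to track how $F(w_k)$ grows relative to $F(w_0)=F(v_0)$ as $k$ increases, using the local QQconv inequality on each short step together with quasi-convexity (Loeper's condition) to control the endpoint terms.

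The key computation: for the step from $w_{k}$ to $w_{k+1}$, apply the local inequality \eqref{QQconvex ineq 2} on the segment from $w_k$ to $w_1=v_1$ — or more carefully, reparametrize so that the short segment $[w_k,w_{k+1}]$ is the one of length $\le r$ and $v_1$ plays the role of the far endpoint. Actually the cleanest route is: on the segment from $w_k$ to $v_1$, which has the short segment $[w_k,w_{k+1}]$ as an initial portion of parameter length $\tau_k=\frac{1/N}{1-s_k}$, the local hypothesis gives
\begin{displaymath}
F(w_{k+1})-F(w_k)\le M_0\,\tau_k\,\big(F(v_1)-F(w_k)\big)_+,
\end{displaymath}
where $M_0$ is the local constant — but this still involves a far endpoint, not a short segment. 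So instead I would apply the local inequality only to segments of total length $\le r$: on $[w_k,w_{k+1}]$ itself (full parameter $[0,1]$ of that subsegment), getting $F(v_{s})-F(w_k)\le M_0\cdot\frac{s-s_k}{s_{k+1}-s_k}\,(F(w_{k+1})-F(w_k))_+$ for $s\in[s_k,s_{k+1}]$. Combined with Loeper's condition $F(w_{k+1})\le\max\{F(v_1),F(w_k)\}$ — and since we are in the regime where $F$ is increasing along the chain (which one shows inductively: once $F(w_k)\le F(v_1)$, quasi-convexity forces $F(w_{k+1})\le F(v_1)$ too) — one derives a recursion of the form $F(w_{k+1})-F(v_0)\le (1+c/N)\big(F(w_k)-F(v_0)\big)$ or additively $F(w_k)-F(v_0)\le \frac{k}{N}\,C\,(F(v_1)-F(v_0))$. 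Iterating $N$ steps and using $(1+c/N)^N\le e^c$ (or telescoping the additive bound) yields $F(v_t)-F(v_0)\le M\,t\,(F(v_1)-F(v_0))$ for a constant $M$ depending only on $M_0$, not on $N$ or $L$.

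The main obstacle I expect is making the additive versus multiplicative bookkeeping honest: the naive telescoping $\sum_k (F(w_{k+1})-F(w_k))$ telescopes to $F(v_t)-F(v_0)$ trivially and gives nothing, so one genuinely needs the recursion to feed the \emph{growth} of $F(w_k)-F(v_0)$ back into each step's right-hand side, and one must verify the sign conditions $(\cdot)_+$ are the favorable ones at every stage — this is where Loeper's condition (quasi-convexity of $F$) is essential, ensuring $F$ stays below $F(v_1)$ along the chain so that $(F(w_{k+1})-F(w_k))_+=F(w_{k+1})-F(w_k)$ and the differences are nonnegative. A secondary technical point is uniformity: the local constant $M_0$ in the hypothesis is assumed uniform over all $x_0,x_1$ and all $v_0,v_1$ with $\lvert v_1-v_0\rvert\le r$, so applying it to each subsegment is legitimate; the resulting global $M$ is then $M_0$ times an absolute factor (at worst $e^{M_0}$-type), independent of the particular configuration. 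Finally, one should handle the edge case where the chain length argument would need $N$ so large that $\tau_k$-type quantities blow up near $k=N$; restricting attention to $t\le 1/M$ as permitted by Remark \ref{t must small} sidesteps this, since then we never need to chain all the way to $v_1$.
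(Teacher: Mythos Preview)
Your chaining approach has a genuine gap that you partly anticipate but do not resolve. The local hypothesis applied to a short subsegment $[w_k,w_{k+1}]$ gives, at parameter $s=1$, only the tautology $F(w_{k+1})-F(w_k)\le M_0\bigl(F(w_{k+1})-F(w_k)\bigr)_+$; it yields no nontrivial relation between $F(w_{k+1})-F(v_0)$ and $F(w_k)-F(v_0)$, so neither the multiplicative recursion $F(w_{k+1})-F(v_0)\le(1+c/N)\bigl(F(w_k)-F(v_0)\bigr)$ nor the additive bound $F(w_k)-F(v_0)\le (k/N)\,C\bigl(F(v_1)-F(v_0)\bigr)$ can be extracted from it. The telescoping you mention indeed collapses, and there is no mechanism in your scheme that produces the crucial factor of $t$ on the right-hand side.

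The paper's proof avoids chaining altogether and exploits Remark~\ref{t must small} more sharply than you do. Fix $M'>\lambda\,\mathrm{diam}(Y)/r\ge \mathrm{diam}(Y^*_{x_0})/r$. If \eqref{QQconvex ineq 2} fails at some $t$ with constant $M'$, then Remark~\ref{t must small} forces $t<1/M'$, hence $|v_t-v_0|<r$. One then chooses, by the intermediate value theorem, a point $t'\in(t,1/M']$ satisfying
\[
\frac{F(v_{t'})-F(v_0)}{t'}=M'\bigl(F(v_1)-F(v_0)\bigr),
\]
which is possible because the left side exceeds the right at $s=t$ and, by Loeper's condition, is at most the right at $s=1/M'$. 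Since $|v_{t'}-v_0|\le r$, the local hypothesis applied \emph{once} to the segment $[v_0,v_{t'}]$ at parameter $t/t'$ gives
\[
F(v_t)-F(v_0)\le M\,\frac{t}{t'}\bigl(F(v_{t'})-F(v_0)\bigr)=MM'\,t\bigl(F(v_1)-F(v_0)\bigr),
\]
the factor $1/t'$ cancelling exactly by the choice of $t'$. This yields QQconv with the uniform constant $MM'$. The step you are missing is precisely this single application of the local inequality on $[v_0,v_{t'}]$ with a specifically chosen intermediate $t'$, rather than iterating over many subintervals; your final remark about restricting to $t\le 1/M$ points in the right direction but stops short of identifying $t'$ and carrying out the cancellation.
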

\begin{proof}
Suppose we have such $r$, and let $v_0,v_1 \in Y^*_{x_0}$ be such that $|v_1 - v_0| > r$. Let $M' > \frac{\lambda }{r}\mathrm{diam}(Y) \geq \frac{1}{r} \mathrm{diam}(Y^*_{x_0})$. If $v_1$ satisfies \eqref{QQconvex ineq 2} for any $t \in [0,1]$ with the constant $M'$ instead of $M$, then we are done. Therefore we assume that 
\begin{equation}
\label{local QQconv implies global : counter ex}
F(v_t) - F(v_0) > M't(F(v_1) - F(v_0)) > 0.
\end{equation}
By Remark \ref{t must small}, we have $t < r / \mathrm{diam}(Y^*_{x_0})$ and
\begin{displaymath}
|v_t - v_0| = t|v_1 - v_0| \leq t \mathrm{diam}(Y^*_{x_0}) < r.
\end{displaymath}
Note that $s \mapsto \frac{1}{s} (F(v_s) - F(v_0))$ is continuous on $\left[ t,\frac{1}{M'} \right]$ and the value of this map is bigger than $M'(F(v_1) - F(v_0) )$ at $s=t$ by \eqref{local QQconv implies global : counter ex}. In addition, Loeper's condition implies $F(v_{1/M'}) \leq F(v_1)$ so that when $s=\frac{1}{M'}$ we have $M'(F(v_{1/M'}) - F(v_0)) \leq M'(F(v_1) - F(v_0))$. Therefore, by the intermediate value theorem, we can choose $t' \in \left( t, \frac{1}{M'} \right]$ such that
\begin{equation}
\label{local QQconv implies global : t'}
\frac{1}{t'}(F(v_{t'}) - F(v_0)) = M'(F(v_1) - F(v_0) ). 
\end{equation}
Note that 
\begin{displaymath}
|v_0 - v_{t'}| = t' |v_1 - v_0| \leq \frac{1}{M'} |v_1 - v_0| \leq r.
\end{displaymath} \\
Combining the assumption of this lemma (\eqref{QQconvex ineq 2} holds if $|v_1-v_0|<r$), \eqref{local QQconv implies global : counter ex} and \eqref{local QQconv implies global : t'}, we obtain
\begin{displaymath}
F(v_t) - F(v_0) \leq M \left( \frac{t}{t'} \right) (F(v_{t'}) - F(v_0)) = MM' t(F(v_1) - F(v_0)).
\end{displaymath}
Note that the constant $MM'$ is uniform over $x_0, x_1$ and $v_0,v_1$. Therefore the cost function is QQconv with constant $MM'$.
\end{proof}

For fixed $x_1, x_0 \in X$ and for $v_0 \in Y^*_{x_0}$, we use notation
\begin{equation}
\label{notation : half ball}
B_r^+(v_0) = \left\{ v \in B_r (v_0) | \langle v-v_0 , \nabla F(v_0) \rangle \geq 0 \right\}.
\end{equation}
For the level sets and sublevel sets of $F$, we use notations
\begin{equation}
\label{notation : level set}
L_{v_0} = \{ v | F(v) = F(v_0)\}, \ SL_{v_0} = \{ v | F(v) \leq F(v_0) \}.
\end{equation}

\begin{Rmk}
    \label{Rmk : increasing in half ball}
    Since $F$ is quasi-convex when Loeper's condition holds, $SL_{v_0}$ is convex. In addition, since $\nabla F(v_0)$ is the outward normal vector of $SL_{v_0}$, we have 
    \begin{displaymath}
        SL_{v_0} \subset \{ v | \langle v - v_0, \nabla F(v_0) \rangle \leq 0 \}.
    \end{displaymath}
    Therefore, the convexity of $SL_{v_0}$ implies 
    \begin{equation}
    \label{eqn: half ball in suplv}
        F(v) \geq F(v_0), \quad \forall v \in B^+_r(v_0).
    \end{equation}
    Moreover, for any $0 \leq s \leq t \leq 1$, we also have
    \begin{equation*}
        F(v_s) \leq F(v_t).
    \end{equation*}
    Indeed, we already have $v_0, v_t \in SL_{v_t}$. Then the convexity of $SL_{v_t}$ yields $v_s \in SL_{v_t}$, which concludes the above inequality.
\end{Rmk}

In the following lemma, we observe that it is enough to show 'Loeper's condition $\Rightarrow$ QQconv' when $v_1 \in B^+_r(v_0)$ for some $r>0$.

\begin{Lem}
\label{Lem : QQconv on half ball to global}
Suppose the cost function $c$ satisfies Loeper's condition. If there exists $r>0$ such that \eqref{QQconvex ineq 2} holds for any $x_0,x_1 \in X$, $v_0 \in Y^*_{x_0}$ and $v_1 \in B^+_r(v_0) \cap Y^*_{x_0}$, then the cost function is QQconv.
\end{Lem}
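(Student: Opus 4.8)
The plan is to use this lemma as a feeder for Lemma \ref{local QQconv implies global}: I will show that the half-ball hypothesis forces \eqref{QQconvex ineq 2} to hold, with the same constant $M$, for \emph{all} $x_0,x_1\in X$, all $v_0\in Y^*_{x_0}$, and all $v_1\in Y^*_{x_0}$ with $|v_1-v_0|\le r$, and then Lemma \ref{local QQconv implies global} gives QQconv. By Remark \ref{positive case only} it is enough to treat the case $F(v_1)>F(v_0)$.

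Fix such $v_0,v_1$ and put $g(t)=F(v_t)$, a differentiable function on $[0,1]$ (as $F$ is differentiable). Since $v_t$ is affine in $t$ and $F$ is quasi-convex on the convex set $Y^*_{x_0}$ (Loeper's condition, in the form \eqref{Qconv F}), $g$ is quasi-convex, with $g(0)=F(v_0)<F(v_1)=g(1)$. Let $t^\ast=\sup\{t\in[0,1]:g(t)\le g(0)\}$. Quasi-convexity makes $\{t:g(t)\le g(0)\}$ a subinterval of $[0,1]$ containing $0$, hence equal to $[0,t^\ast]$; as $g(1)>g(0)$ we have $t^\ast<1$, and continuity gives $g(t^\ast)=g(0)$, i.e. $F(v_{t^\ast})=F(v_0)$. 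For $t\in[0,t^\ast]$ we have $F(v_t)-F(v_0)\le 0\le Mt(F(v_1)-F(v_0))$, so \eqref{QQconvex ineq 2} holds there; what remains is $t\in(t^\ast,1]$.

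For that range I restart the segment at the exit point $v_{t^\ast}$. Since $g(t)>g(t^\ast)$ for every $t>t^\ast$, the right-hand difference quotients of $g$ at $t^\ast$ are positive, so $g'(t^\ast)\ge 0$, that is $\langle\nabla F(v_{t^\ast}),v_1-v_0\rangle=g'(t^\ast)\ge 0$; because $v_1-v_{t^\ast}=(1-t^\ast)(v_1-v_0)$ this yields $\langle\nabla F(v_{t^\ast}),v_1-v_{t^\ast}\rangle\ge 0$. Combined with $|v_1-v_{t^\ast}|=(1-t^\ast)|v_1-v_0|\le r$ and $v_{t^\ast},v_1\in Y^*_{x_0}$ (using \emph{($c$DomConv)}), this says precisely $v_1\in B^+_r(v_{t^\ast})\cap Y^*_{x_0}$. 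So the hypothesis applies to the pair $(v_{t^\ast},v_1)$: with $w_s=(1-s)v_{t^\ast}+sv_1$,
\begin{displaymath}
F(w_s)-F(v_{t^\ast})\le Ms\bigl(F(v_1)-F(v_{t^\ast})\bigr)_+,\qquad s\in[0,1].
\end{displaymath}
For $t\in(t^\ast,1]$ one has $v_t=w_s$ with $s=\tfrac{t-t^\ast}{1-t^\ast}\in(0,1]$, and since $F(v_{t^\ast})=F(v_0)<F(v_1)$ this reads $F(v_t)-F(v_0)\le Ms(F(v_1)-F(v_0))$. The elementary inequality $s=\tfrac{t-t^\ast}{1-t^\ast}\le t$ (equivalent to $t^\ast(1-t)\ge 0$) then upgrades this to $F(v_t)-F(v_0)\le Mt(F(v_1)-F(v_0))$, i.e. \eqref{QQconvex ineq 2}. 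Together with the range $[0,t^\ast]$, \eqref{QQconvex ineq 2} holds on all of $[0,1]$ for every admissible $v_0,v_1$ with $|v_1-v_0|\le r$, and Lemma \ref{local QQconv implies global} completes the proof.

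The step I expect to be the heart of the argument is the middle claim: after truncating the segment at the point $v_{t^\ast}$ where $F$ first leaves the sublevel set $SL_{v_0}$, the leftover direction $v_1-v_{t^\ast}$ must point into the half-space cutting out $B^+_r(v_{t^\ast})$, so that the hypothesis is applicable at $v_{t^\ast}$. This uses Loeper's condition twice — once to make $\{t:g(t)\le g(0)\}$ a single interval (so that $t^\ast$ really is an exit time) and once implicitly through $SL_{v_0}=SL_{v_{t^\ast}}$ — together with the bookkeeping point $s\le t$, which is what prevents the constant $M$ from deteriorating.
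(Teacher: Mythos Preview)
Your proof is correct and rests on the same exit-time idea as the paper's: locate the last $t^\ast$ with $F(v_{t^\ast})=F(v_0)$, check that the leftover direction $v_1-v_{t^\ast}$ points into the nonnegative half-space at $v_{t^\ast}$ so that the half-ball hypothesis applies there, and then use the reparametrization inequality $\tfrac{t-t^\ast}{1-t^\ast}\le t$. The only structural difference is the order of the two extensions. The paper first invokes the proof of Lemma~\ref{local QQconv implies global} to pass from the half-ball $B_r^+(v_0)$ to the full half-space $\{\langle v-v_0,\nabla F(v_0)\rangle\ge 0\}$ (picking up a factor $M'$), and only then runs the exit-time argument to cover $v_1$ in the opposite half-space; you instead run the exit-time argument first to pass from the half-ball to the full ball $B_r(v_0)$ with the \emph{same} constant $M$, and then apply Lemma~\ref{local QQconv implies global}. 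Your ordering is slightly cleaner since the exit-time step does not degrade the constant, and your derivation of $\langle\nabla F(v_{t^\ast}),v_1-v_{t^\ast}\rangle\ge 0$ via the one-variable derivative $g'(t^\ast)\ge 0$ is a tidy alternative to the paper's geometric argument about the ray from an interior point of $SL_{v_0}$.
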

\begin{proof}
Suppose we have such $r$.  Suppose $v_0 \in Y^*_{x_0}$, then by the proof of Lemma \ref{local QQconv implies global}, for any $v_1 \in Y^*_{x_0} \cap \{ v | \langle v-v_0, \nabla F(v_0) \rangle \geq 0 \}$, we obtain the inequality \eqref{QQconvex ineq 2} for some $M \geq 1$. Hence we need to show the inequality \eqref{QQconvex ineq 2} when $v_1 \in Y^*_{x_0} \cap \{ v | \langle v-v_0 , \nabla F(v_0) \rangle <0 \}$. Moreover, by Remark \ref{positive case only}, we can assume $F(v_1) \geq F(v_0)$. \\
Let $v_1 \in Y^*_{x_0} \cap \{ v | \langle v-v_0 , \nabla F(v_0) \rangle <0 \}$ and $F(v_1) \geq F(v_0)$. Then $\langle v_1 - v_0, \nabla F(v_0) \rangle < 0$ implies that $F(v_{\epsilon}) < F(v_0)$ for some $\epsilon >0$ small. Hence, by the intermediate value theorem, there exists $t' \in (\epsilon,1) $ such that $F(v_{t'}) = F(v_0)$. Moreover, we have $ \langle v_1 - v_{t'}, \nabla F(v_{t'}) \rangle > 0$ since $v_{t'}$ is in the intersection of $L_{v_0}$, the boundary of the convex set $SL_{v_0}$, and a ray that starts from $v_{\epsilon}$ which is an interior point of $SL_{v_0}$. Also, note that $|v_1 - v_{t'}| \leq |v_1-v_0| \leq r$. Then the assumption of this lemma (QQconv holds if $v_1 \in B^+_r(v_0) \cap Y^*_{x_0}$) yields
\begin{align*}
F(v_s) - F(v_0) & = F(v_{s})- F(v_{t'}) \\
& \leq M \frac{s-t'}{1-t'}( F(v_{1}) - F(v_{t'}) ) \leq M s ( F(v_1) - F(v_0))
\end{align*}
for $s \in [t', 1]$. If $s \in [0,t')$, then $F(v_{s}) \leq F(v_0)$ and by Remark \ref{positive case only}, we get the inequality \eqref{QQconvex ineq 2}. Therefore the inequality \eqref{QQconvex ineq 2} holds for $v_1 \in Y^*_{x_0} \cap \{ v | \langle v-v_0 , \nabla F(v_0) \rangle <0 \} \cap B_r(v_0)$ too, and Lemma \ref{local QQconv implies global} implies that the cost function $c$ is QQconv.
\end{proof}

We close this section by introducing an estimate for the modulus of continuity of $\nabla F$. 

\begin{Lem}
\label{Lem : Lip grad F}
for any $x_1, x_0 \in X$ and $v_1, v_0 \in Y^*_{x_0}$, we have
\begin{equation}
\label{Lem : Lip grad F / Lip ineq}
| \nabla F(v_1) - \nabla F(v_0) | \leq C |x_1 - x_0| \omega(\lambda|v_1 - v_0 |)
\end{equation}
for some constant $C$ that depends on $\alpha$ and $\lambda$.
\end{Lem}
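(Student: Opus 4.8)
The plan is to differentiate $F$ explicitly, write $\nabla F(v_1)-\nabla F(v_0)$ as a sum of two terms — one capturing the variation of the inverse mixed Hessian and one capturing the variation of the first-order data — and estimate each using the structural bounds collected in Section~\ref{Assumptions}. Write $y_i=\cexpz(v_i)\in Y$, so that $v_i=-D_x c(x_0,y_i)$. Since $F$ is $C^1$ (as already used in the proof of Lemma~\ref{Lem : QQconv on half ball to global}) and $D\cexpz(v)=\bigl[-D^2_{xy}c(x_0,\cexpz(v))\bigr]^{-1}$ by the inverse function theorem, the chain rule gives
\[
\nabla F(v)=-\bigl[D^2_{yx}c(x_0,y)\bigr]^{-1}\,w(y),\qquad y=\cexpz(v),
\]
where $w(y):=-D_y c(x_1,y)+D_y c(x_0,y)$. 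Abbreviating $A_i:=\bigl[D^2_{yx}c(x_0,y_i)\bigr]^{-1}$ and $w_i:=w(y_i)$, I would split
\[
\nabla F(v_1)-\nabla F(v_0)=-A_1(w_1-w_0)-(A_1-A_0)\,w_0 .
\]

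Next I would bound the four factors. By (Non-degenerate) and compactness of $X\times Y$, $\|A_1\|$ is bounded by a fixed constant. By (Lip hessian) — specifically the Lipschitz estimate for $[D^2_{xy}c]^{-1}$ — together with the bi-Lipschitz bound $|y_1-y_0|\le\lambda|v_1-v_0|$, one gets $\|A_1-A_0\|\le\Lambda|y_1-y_0|\le\Lambda\lambda|v_1-v_0|$. The bi-Lipschitz bound for $-D_y c(\cdot,y)$ gives $|w_0|\le\lambda|x_1-x_0|$. The only term needing a genuine second-order argument is $w_1-w_0$: integrating in the $x$-variable (one cannot integrate in $y$, since only the mixed Hessian is assumed to exist) with $x_s:=x_0+s(x_1-x_0)$,
\[
w(y)=-\int_0^1 D^2_{yx}c(x_s,y)(x_1-x_0)\,ds,
\]
so $w_1-w_0=-\int_0^1\bigl[D^2_{yx}c(x_s,y_1)-D^2_{yx}c(x_s,y_0)\bigr](x_1-x_0)\,ds$, and the Lipschitz continuity of the mixed Hessian yields $|w_1-w_0|\le\Lambda|y_1-y_0|\,|x_1-x_0|\le\Lambda\lambda|v_1-v_0|\,|x_1-x_0|$.

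Combining the four estimates gives $|\nabla F(v_1)-\nabla F(v_0)|\le C|x_1-x_0|\,|v_1-v_0|$ with $C$ depending only on $\lambda$, $\Lambda$, and the uniform bound on $\|[D^2_{xy}c]^{-1}\|$; in particular $C$ is independent of $x_0,x_1,v_0,v_1$, which is what the lemma requires. I expect the estimate for $w_1-w_0$ to be the only real point: it is where the hypothesis (Lip hessian) is used rather than mere boundedness of the mixed Hessian, and one should briefly justify that the integral representation of $w$ is legitimate, which holds because $D_y c(\cdot,y)$ is differentiable in $x$ with continuous (indeed Lipschitz) derivative $D^2_{xy}c(\cdot,y)$. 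One must also keep track of the transpose conventions relating $D^2_{xy}c$, $D^2_{yx}c=(D^2_{xy}c)^T$ and $D\cexpz$, but these do not affect the final bound.
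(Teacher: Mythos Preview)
Your approach is essentially the paper's: compute $\nabla F(v)=[-D^2_{yx}c(x_0,y)]^{-1}w(y)$, split $\nabla F(v_1)-\nabla F(v_0)$ into a ``variation of the inverse Hessian'' piece and a ``variation of $w$'' piece, and estimate each using (Lip hessian) together with the bi-Lipschitz bounds. The paper adds and subtracts $A_0w_1$ rather than your $A_1w_0$, but that is cosmetic.

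There is one genuine issue in your estimate of $w_1-w_0$. You integrate along the Euclidean segment $x_s=x_0+s(x_1-x_0)$, but $X$ is only assumed to be a compact subset of $\mathbb{R}^n$, not convex; the hypothesis on the domains is ($c$DomConv*), i.e.\ convexity of $X^*_y$, not of $X$ itself. Since $c$ is defined only on $X\times Y$, the quantities $D_y c(x_s,y)$ and $D^2_{yx}c(x_s,y)$ in your integral representation of $w$ need not be defined. The paper handles exactly this point by integrating along the $c^*$-segment
\[
x_s=\exp^{c^*}_{y_0}\bigl((1-s)q_0+sq_1\bigr),\qquad q_i=-D_y c(x_i,y_0),
\]
which lies in $X$ by ($c$DomConv*). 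Along this path $\dfrac{dx_s}{ds}=[-D^2_{yx}c(x_s,y_0)]^{-1}(q_1-q_0)$, so an extra bounded factor $[-D^2_{yx}c(x_s,y_0)]^{-1}$ appears and $|q_1-q_0|\le\lambda|x_1-x_0|$ replaces $|x_1-x_0|$; after these cosmetic changes your estimate goes through and yields the same bound $|w_1-w_0|\lesssim|x_1-x_0|\,|v_1-v_0|$.
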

\begin{proof}
By the definition of $F$ \eqref{Def : F}, we can compute
\begin{equation}
\label{grad F}
\nabla F(v) = [-D^2_{yx} c(x_0,y)]^{-1} \left( -D_y c(x_1, y) + D_y c(x_0, y) \right)
\end{equation}
where $y = \cexpz(v)$. Note that we used $D^2_{x_i y_j} c = D^2_{y_j x_i}c$. Hence we have
\begin{equation}
\label{Lem : Lip grad F / difference of grad}
    \begin{aligned}
        & \nabla F(v_1) - \nabla F(v_0) \\
        = &[-D^2_{yx} c(x_0,y_1)]^{-1} \left( -D_y c(x_1, y_1) + D_y c(x_0, y_1)\right) \\
        & - [-D^2_{yx} c(x_0,y_0)]^{-1} \left( -D_y c(x_1, y_0) + D_y c(x_0, y_0)\right) 
    \end{aligned}
\end{equation}
where $y_i = \cexpz(v_i)$. Let $L1$ and $L2$ be the second and third line in \eqref{Lem : Lip grad F / difference of grad} so that $\nabla F(v_1) - \nabla F(v_0) = L1 + L2$. Let
\begin{align*}
& L1' = L1 - [-D^2_{yx} c(x_0,y_0)]^{-1}\left( -D_y c(x_1, y_1) + D_y c(x_0, y_1)\right) \\
& L2' = L2 + [-D^2_{yx} c(x_0,y_0)]^{-1}\left( -D_y c(x_1, y_1) + D_y c(x_0, y_1)\right),
\end{align*}
then $\nabla F(v_1) - \nabla F(v_0) = L1' + L2'$. We use the modulus of continuity of the mixed Hessian $D^2_{xy}c$ on $L1'$ to obtain
\begin{equation}
\label{Lem : Lip grad F / L1'}
    \begin{aligned}
        |L1'| & = \left\| [-D^2_{yx} c(x_0,y_1)]^{-1} - [-D^2_{yx} c(x_0,y_0)]^{-1} \right\| \left| -D_y c(x_1, y_1) + D_y c(x_0, y_1)\right| \\
        & \leq \alpha^2 \omega (|y_1 - y_0|) \times \lambda |x_1 - x_0| \\
        & \leq \lambda \alpha^2 \omega(\lambda | v_1 - v_0 |) | x_1 - x_0 |.
    \end{aligned}
\end{equation}
To get an estimate for $L2'$, we calculate
\begin{equation*}
    \begin{aligned}
        &|L2'| \\
& = \left\| [-D^2_{yx} c(x_0,y_0)]^{-1} \right\| \left| -D_y c(x_1, y_1) + D_y c(x_0, y_1) + D_y c(x_1, y_0) - D_y c(x_0, y_0) \right| \\
& = \left\| [-D^2_{yx} c(x_0,y_0)]^{-1} \right\| \\
& \times \left| \int_0^1 [-D^2_{yx}c(x_s,y_0)]^{-1} [-D^2_{yx} c(x_s, y_1) ] (q_1 - q_0) ds - (q_1 - q_0) \right|  \\
& = \left\| [-D^2_{yx} c(x_0,y_0)]^{-1} \right\|  \\
&  \times \left| \int_0^1  [-D^2_{yx}c(x_s,y_0)]^{-1} \left( [-D^2_{yx} c(x_s, y_1) ] - [-D^2_{yx}c(x_s,y_0)] \right) ds (q_1 - q_0) \right|
    \end{aligned}
\end{equation*}
where $q_i = -D_y c(x_i, y_0)$ and $x_s = exp^{c*}_{y_0}( (1-s)q_0 + s q_1 )$. We use the Lipschitzness of $D_y c$ and the modulus of continuity of $D^2_{yx}c$ to obtain
\begin{equation}
\label{Lem : Lip grad F / L2'}
|L2'| \leq \lambda \alpha^2 \omega(\lambda|v_1 - v_0|)|x_1 - x_0|.
\end{equation}
We combine \eqref{Lem : Lip grad F / L1'} and \eqref{Lem : Lip grad F / L2'} to obtain the desired result with $C = 2\lambda \alpha^2$.
\end{proof}

\begin{Rmk}
\label{Remark : grad F comparable to x1-x0}
From the equation \eqref{grad F}, we can get
\begin{displaymath}
|\nabla F(v)| \sim |x_1 - x_0|.
\end{displaymath}
In particular, there exists a constant $C_1$ that depends on $\alpha$ and $\lambda$ such that
\begin{equation}
\label{Remark : grad F comparable to x1-x0 / lower bound}
| \nabla F(v) | \geq C_1 |x_1 - x_0|.
\end{equation}
\end{Rmk}

\section{Interior QQconv}
\label{IntQQconv}
For $v_0 \in Y^*_{x_0}$, we denote the cone with vertex $v_1$ and axis $\nabla F(v_0)$ by
\begin{equation}
\label{notation : cone}
\cone{k}{v_0} = \left\{ v | \langle v-v_0 , \nabla F(v_0) \rangle \geq \frac{1}{k}|v-v_0||\nabla F(v_0) | \right\}.
\end{equation}

\begin{Lem}
\label{Lem : QQconv on sec cone}
Let $c$ be the cost function that satisfies Loeper's condition and let $k \geq 1$. Then there exists $r_k >0$ such that if $v_0 \in Y^*_{x_0}$ and $v_1 \in \cone{k}{v_0} \cap B_{r_k}(v_0) \cap Y_{x_0}^*$, then 
\begin{displaymath}
F(v_t) - F(v_0) \leq 5t (F(v_1) - F(v_0)).
\end{displaymath}
\end{Lem}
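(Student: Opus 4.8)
The plan is to reduce the statement to a local/quantitative version of Loeper's condition at a single point, then exploit the cone geometry together with the Lipschitz estimate on $\nabla F$ from Lemma \ref{Lem : Lip grad F}. Fix $x_0, x_1 \in X$ and $v_0 \in Y^*_{x_0}$, and abbreviate $g = \nabla F(v_0)$, so that for $v_1 \in \cone{k}{v_0}$ we have $\langle v_1 - v_0, g\rangle \geq \frac{1}{k}|v_1-v_0||g|$. By Remark \ref{positive case only} we may assume $F(v_1) > F(v_0)$, and we want to bound $F(v_t) - F(v_0)$ from above by $5t(F(v_1)-F(v_0))$ for all $t \in [0,1]$. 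The idea is that, because $F$ is $C^1$ with $\nabla F(v_0) = g$, we have $F(v_s) - F(v_0) = s\langle v_1 - v_0, g\rangle + o(s|v_1-v_0|)$ for small $s$, and more precisely, integrating $\nabla F$ along the segment $v_s = v_0 + s(v_1-v_0)$ and using \eqref{Lem : Lip grad F / Lip ineq},
\begin{align*}
\bigl| F(v_s) - F(v_0) - s\langle v_1 - v_0, g\rangle \bigr|
&= \left| \int_0^s \langle v_1 - v_0, \nabla F(v_\sigma) - \nabla F(v_0)\rangle \, d\sigma \right| \\
&\leq \int_0^s |v_1 - v_0| \cdot C|x_1 - x_0|\, \sigma |v_1 - v_0| \, d\sigma
= \tfrac{1}{2} C |x_1-x_0| \, s^2 |v_1-v_0|^2.
\end{align*}

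Next I would turn this into a two-sided comparison of $F(v_s) - F(v_0)$ with the linear term $s\langle v_1-v_0,g\rangle =: s \ell$, where $\ell \geq \frac{1}{k}|v_1-v_0||g| \geq \frac{C_1}{k}|v_1-v_0||x_1-x_0|$ by the cone condition and Remark \ref{Remark : grad F comparable to x1-x0}. The error term above is at most $\frac{1}{2}C|x_1-x_0| s^2|v_1-v_0|^2$, so dividing by $\ell$ the relative error is controlled by $\frac{Ck}{2C_1} s |v_1 - v_0|$. Hence if we choose $r_k$ so that $\frac{Ck}{2C_1} r_k \leq \frac{1}{10}$ (say), then for $|v_1 - v_0| \leq r_k$ we get, for every $s \in [0,1]$,
\begin{displaymath}
\left(1 - \tfrac{1}{10}\right) s\ell \;\leq\; F(v_s) - F(v_0) \;\leq\; \left(1 + \tfrac{1}{10}\right) s\ell.
\end{displaymath}
Applying the upper bound at $s = t$ and the lower bound at $s = 1$ gives
\begin{displaymath}
F(v_t) - F(v_0) \leq \tfrac{11}{10} t \ell \leq \tfrac{11}{10}\cdot\tfrac{10}{9}\, t\,(F(v_1) - F(v_0)) = \tfrac{11}{9} t (F(v_1) - F(v_0)) \leq 5 t (F(v_1) - F(v_0)),
\end{displaymath}
which is the claim (and the constant $5$ is comfortably met; one could even get a constant close to $1$ by shrinking $r_k$).

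I expect the main technical point to be making the error estimate genuinely \emph{quantitative and uniform}, i.e. checking that the constant $C$ in Lemma \ref{Lem : Lip grad F} and the constant $C_1$ in Remark \ref{Remark : grad F comparable to x1-x0} do not secretly depend on $v_0$, $x_0$, $x_1$ — they do not, since they come only from $\lambda$, $\alpha$, $\Lambda$ and $\mathrm{diam}(Y)$, which are fixed by the standing assumptions — and that the segment $v_\sigma$ stays inside $Y^*_{x_0}$, which holds by ($c$DomConv) since $v_0, v_1 \in Y^*_{x_0}$. A secondary subtlety is the degenerate case $x_1 = x_0$: then $F \equiv 0$ and the inequality is trivial, so we may assume $x_1 \neq x_0$ and divide by $|x_1 - x_0|$ freely. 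One small sign check: the cone condition forces $\ell = \langle v_1 - v_0, g\rangle \geq 0$, and since $F(v_1) > F(v_0)$ is assumed we in fact have $\ell > 0$ strictly once $r_k$ is small enough (the lower bound at $s=1$ gives $F(v_1)-F(v_0) \geq \frac{9}{10}\ell$, so $\ell > 0$), so all the divisions above are legitimate.
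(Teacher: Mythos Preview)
Your argument is correct and rests on the same two inputs the paper uses --- the Lipschitz estimate \eqref{Lem : Lip grad F / Lip ineq} on $\nabla F$ and the lower bound \eqref{Remark : grad F comparable to x1-x0 / lower bound} on $|\nabla F|$ --- but it is organized more efficiently than the paper's proof. The paper fixes the reference scale $\frac{1}{k}|\nabla F(v_0)|$ and then needs a dyadic decomposition of $\cone{k}{v_0}$ into shells $\cone{k/2^i}{v_0}\setminus\cone{k/2^{i+1}}{v_0}$: on each shell the directional derivative $\langle \nabla F(v_s),u\rangle$ is pinched between $\frac{1}{2}\cdot\frac{2^i}{k}|\nabla F(v_0)|$ and $\frac{5}{2}\cdot\frac{2^i}{k}|\nabla F(v_0)|$, yielding the ratio $5$. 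You instead take $\ell=\langle v_1-v_0,\nabla F(v_0)\rangle$ itself as the reference and bound the second-order remainder directly; because $\ell$ already adapts to the position of $v_1$ in the cone, no shell decomposition is needed, and you obtain the sharper constant $\tfrac{11}{9}$ (in fact any constant $>1$ by shrinking $r_k$). Two small remarks: Loeper's condition is not actually used in either argument for this lemma (your appeal to Remark~\ref{positive case only} is harmless but unnecessary, since your lower bound at $s=1$ already gives $F(v_1)-F(v_0)\geq\tfrac{9}{10}\ell>0$); and the choice $r_k=\frac{C_1}{5Ck}$ you implicitly make is of the same order as the paper's $r_k=\frac{C_1}{2Ck}$, so all downstream uses of $r_k$ go through unchanged.
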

\begin{proof}
Note first that \eqref{eqn: half ball in suplv} implies 
\begin{equation*}
F(v_1)-F(v_0) \geq 0 , \quad \forall v_1 \in \cone{k}{v_0} \cap B_{r_k}(v_0) \cap Y_{x_0}^*
\end{equation*} 
Let $r_k>0$ be such that $ \omega(\lambda r_k ) \leq  \frac{C_1}{2Ck}$, where $C_1$ is from Remark \ref{Remark : grad F comparable to x1-x0} and $C$ is from Lemma \ref{Lem : Lip grad F}. Note that
\begin{equation}
\label{Lem : QQconv on sec cone / cutting}
\cone{k}{v_0} \cap B_{r_k}(v_0) \subset \overline{ \bigcup_{i=0}^{\infty} \left[ \left( \cone{\frac{k}{2^i}}{v_0} \cap B_{r_k}(v_0) \right) \setminus \cone{\frac{k}{2^{i+1}}}{v_0} \right]}.
\end{equation}
Let $k_i = \frac{k}{2^i}$. Note that $r_{k_i}$ increases as $i$ increases. Then, for $v \in B_{r_k} \cap Y_{x_0}^* \subset B_{r_{k_i}}$, we have
\begin{equation}
\label{grad est in small ball}
    \begin{aligned}
        | \nabla F(v) - \nabla F(v_0) | &\leq C |x_1 - x_0 | \omega(\lambda|v - v_0|) \leq C |x_1 - x_0 | \omega(\lambda r_k)  \\
        &\leq \frac{C_1}{2k_i} |x_1 - x_0| \leq \frac{1}{2k_i} | \nabla F(v_0)|,
    \end{aligned}
\end{equation}
where we have used \eqref{Remark : grad F comparable to x1-x0 / lower bound} in the last inequality. Let $v_1 \in \left( \cone{k_i}{v_0} \cap B_{r_k}(v_0) \cap Y_{x_0}^*\right) \setminus \cone{\frac{k_i}{2}}{v_0}$. Then we have
\begin{align*}
\langle \nabla F(v_t), v_1 - v_0 \rangle & = \langle \nabla F(v_t) - \nabla F(v_0) , v_1 - v_0 \rangle + \langle \nabla F(v_0) , v_1 - v_0 \rangle \\
& \geq - \frac{1}{2k_i} |\nabla F(v_0)||v_1 - v_0| + \frac{1}{k_i} | \nabla F(v_0) ||v_1 - v_0|\\
&= \frac{1}{2k_i} | \nabla F(v_0) | |v_1 - v_0 |
\end{align*}
where we have used \eqref{grad est in small ball} and $v_1 \in \cone{k_i}{v_0}$ for the above inequality. Therefore, 
\begin{equation}
\label{Lem : QQconv on sec cone / lower est}
    \begin{aligned}
        F(v_1) - F(v_0) & = \int_0^{1} \langle \nabla F (tv_1 +(1-t)v_0), v_1 - v_0 \rangle ds \\
        & \geq \frac{1}{2k_i} |v_1 - v_0||\nabla F(v_0) |.
    \end{aligned}
\end{equation}
Moreover, 
\begin{equation*}
    \begin{aligned}
        \langle \nabla F(v_s),v_1 - v_0 \rangle & = \langle \nabla F(v_s) - \nabla F(v_0), v_1 - v_0 \rangle + \langle \nabla F(v_0) , v_1 - v_0 \rangle \\
        & \leq \frac{1}{2k_i} | \nabla F(v_0) ||v_1 - v_0| + \frac{2}{k_i} | \nabla F(v_0) ||v_1 - v_0|\\
        &= \frac{5}{2k_i} | \nabla F(v_0) ||v_1 - v_0|,
    \end{aligned}
\end{equation*}
where we have used $v_1 \notin \cone{\frac{k_i}{2}}{v_0} $ in the above inequality. Therefore,
\begin{equation}
\label{Lem : QQconv on sec cone / upper est}
    \begin{aligned}
        F(v_t) - F(v_0) & = \int_0^{t} \langle \nabla F(v_s) , v_1 - v_0 \rangle ds \\
& \leq t|v_1 - v_0| \times \frac{5}{2k_i} | \nabla F(v_0) |.
    \end{aligned}
\end{equation}
Combining \eqref{Lem : QQconv on sec cone / lower est} and \eqref{Lem : QQconv on sec cone / upper est}, we get the desired inequality for $v_1 \in \left( \cone{k_i}{v_0} \cap B_{r_{k_i}  }(v_0) \cap Y_{x_0}^*\right) \setminus \cone{\frac{k_i}{2}}{v_0}$. \\
Now, for any $v_1 \in \cone{k}{v_0} \cap B_{r_k} \cap Y_{x_0}^*$, \eqref{Lem : QQconv on sec cone / cutting} implies that there exists a sequence $v^j_1$ such that $v^j_1 \in \left( \cone{\frac{k}{2^i}}{v_0} \cap B_{r_k}(v_0) \right) \setminus \cone{\frac{k}{2^{i+1}}}{v_0}$ for some $i$ and converges to $v_1$ as $j \to \infty$. Then we have
\begin{equation*}
F(v^j_t) - F(v_0) \leq 5t (F(v^j_1)-F(v_0)),
\end{equation*}
where $v^j_t = tv^j_1 + (1-t) v_0$. Taking $j \to \infty$, in the above inequality, we obtain the desired inequality for any $v_1 \in \cone{k}{v_0} \cap B_{r_k} \cap Y_{x_0}^*$.
\end{proof}

\begin{Rmk}
\label{Remark : comparable cone}
From the equation \eqref{grad est in small ball}, we know that if $v \in B_{r_k}(v_0)$, then
\begin{equation}
\label{grad in small ball}
\nabla F (v) \in B_{\frac{\rho_0}{2k}}(\nabla F(v_0) )
\end{equation}
where $\rho_0 = | \nabla F (v_0) |$. Let $\nabla F(v) =\nabla F(v_0) + u$ where $ |u| \leq \frac{\rho_0}{2k}$, and consider $\cone{k'}{v_0}$. For any $\nu$ such that $\nu + v_0 \in \cone{k'}{v_0}$ with $| \nu |=1$, we have
\begin{equation}
\label{lower comparability}
\langle \nu , \nabla F(v) \rangle = \langle \nu  , \nabla F(v_0) \rangle + \langle \nu  , u \rangle \geq \frac{1}{k'} | \nabla F(v_0) | - \frac{1}{2k} | \nabla F(v_0) | .
\end{equation}
Therefore, once we fix $k$ and $k' < 2k$, we get $\langle \nu , \nabla F(v) \rangle \sim | \nabla F(v_0) |$.
\end{Rmk}

\begin{Rmk}
\label{Remark : icone}
Suppose $B_{r_k}(v_0) \subset Y^*_{x_0}$. Since $F$ is quasi-convex, the sublevel set $SL_{v_0}$ is convex and $\nabla F(v_0)$ is the outward normal vector. Moreover, for any $v' \in L_{v_0}\cap B_{r_k}(v_0)$, we know \eqref{grad in small ball} holds. Then
\begin{align*}
SL_{v_0} \cap B_{r_k}(v_0) & = \left( \bigcap_{v'}  \{ v | \langle v-v' , \nabla F(v') \rangle  \leq 0 \} \right) \cap B_{r_k}(v_0) \\
& \supset \left( \bigcap_{v'} \{ v | \langle v - v_0 , \nabla F(v') \rangle \leq 0 \} \right) \cap B_{r_k}(v_0) \\
& \supset \left\{ v | \langle v - v_0, u \rangle \leq 0, \forall u \in B_{\frac{\rho_0}{2k}}(\nabla F(v_0)) \right\} \cap B_{r_k}(v_0).
\end{align*}
where $\displaystyle \bigcap_{v'}$ means the intersection over $v' \in L_{v_0} \cap B_{r_k}(v_0)$. In particular, this shows that $v_0 - \frac{r_k}{2 \rho_0}\nabla F(v_0) \in SL_{v_0} \cap B_{r_k}(v_0)$. Now consider the cone
\begin{equation}
\label{notation : icone}
\icone{k'}{v_0}{v_1} = \left\{ v | \langle v-v_1 , \nabla F(v_0) \rangle \leq - \frac{1}{k'} |v-v_1|| \nabla F(v_0) | \right\}.
\end{equation}
Let $k' \geq 3$ then for any $v_1 \in B^+_{r_k}(v_0)$, 
\begin{align*}
\langle v_0 - \frac{r_k}{2\rho_0} \nabla F(v_0) - v_1, \nabla F(v_0) \rangle & = -\langle v_1 - v_0 , \nabla F(v_0) \rangle - \frac{r_k}{2 \rho_0} \langle \nabla F(v_0), \nabla F(v_0) \rangle \\
& \leq - \frac{r_k}{2} | \nabla F(v_0) |  \\
& = - \frac{1}{3}\times \frac{3r_k}{2} \times | \nabla F(v_0) |\\
& \leq - \frac{1}{k'} \times |v_0 - \frac{r_k}{2\rho_0} \nabla F(v_0) - v_1| \times | \nabla F(v_0) |.
\end{align*}
Therefore $\icone{k}{v_0}{v_1}$ contains $v_0 - \frac{r_k}{2 \rho_0}\nabla F(v_0)$ for any $v_1 \in B^+_{r_k}(v_0)$, which implies that $\icone{k'}{v_0}{v_1} \cap L_{v_0} \cap B_{r_k}(v_0) \neq \emptyset$.
\end{Rmk}

\begin{Lem}
\label{Lem : local QQconv}
Fix $3\leq k' < k$. Suppose $B_{r_k}(v_0) \subset Y^*_{x_0}$. Then, for any $v_1 \in B^+_{r_k}(v_0)$, we have
\begin{displaymath}
F(v_t) - F(v_0) \leq M_{k'}t( F(v_1) - F(v_0) )
\end{displaymath}
for some $k'>0$ and $M_{k'}$ a constant depending on $k'$.
\end{Lem}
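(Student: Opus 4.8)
The plan is to exploit the convexity of the sublevel set $SL_{v_0}$ (Loeper's condition) together with the fact that $\nabla F$ is nearly constant on $B_{r_k}(v_0)$: with $r_k=\tfrac{C_1}{2Ck}$, the estimate \eqref{grad est in small ball} (coming from Lemma~\ref{Lem : Lip grad F} and Remark~\ref{Remark : grad F comparable to x1-x0}) gives $|\nabla F(v)-\nabla F(v_0)|\le\tfrac{1}{2k}|\nabla F(v_0)|$ for all $v\in B_{r_k}(v_0)$. I would first normalise by an isometry so that $v_0=0$ and $\nabla F(v_0)=\rho_0 e_n$, $\rho_0=|\nabla F(v_0)|>0$ (the case $x_1=x_0$ is trivial, $F$ being then constant). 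Since $\partial_nF=\langle\nabla F,e_n\rangle\ge\rho_0\bigl(1-\tfrac{1}{2k}\bigr)>0$ throughout $B_{r_k}(v_0)$, the level set $L_{v_0}\cap B_{r_k}(v_0)$ is the graph $v_n=g(v')$, $v=(v',v_n)$, of a function over a neighbourhood of $0$ in $\{v_n=0\}$ with $|\nabla g|=O(1/k)$; moreover $g(0)=0$, $\nabla g(0)=0$, and $g$ is concave --- because $SL_{v_0}=\{v_n\le g(v')\}$ near $0$ is convex --- hence $g\le0$.

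Integrating $\partial_nF$ along the vertical from the graph and using $\partial_nF\in[\rho_0(1-\tfrac{1}{2k}),\rho_0(1+\tfrac{1}{2k})]$ on $B_{r_k}(v_0)$ gives, for $v=(v',v_n)\in B_{r_k}(v_0)$ with $v_n\ge g(v')$,
\[
\Bigl(1-\tfrac{1}{2k}\Bigr)\rho_0\,\bigl(v_n-g(v')\bigr)\ \le\ F(v)-F(v_0)\ \le\ \Bigl(1+\tfrac{1}{2k}\Bigr)\rho_0\,\bigl(v_n-g(v')\bigr).
\]
Now the conclusion is immediate: I may assume $F(v_1)>F(v_0)$ (otherwise $F(v_t)\le F(v_0)$ by quasi-convexity, Remark~\ref{positive case only}) and I only need those $t$ with $F(v_t)>F(v_0)$. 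For $v_1=(v_1',v_{1,n})\in B^+_{r_k}(v_0)$ this forces $v_{1,n}-g(v_1')>0$ and, since $v_t=(tv_1',tv_{1,n})$, also $tv_{1,n}-g(tv_1')>0$; concavity of $g$ with $g(0)=0$ gives $g(tv_1')\ge t\,g(v_1')$, hence $tv_{1,n}-g(tv_1')\le t\,(v_{1,n}-g(v_1'))$. Feeding this into the upper bound at $v=v_t$ and the lower bound at $v=v_1$,
\[
F(v_t)-F(v_0)\ \le\ \Bigl(1+\tfrac{1}{2k}\Bigr)\rho_0\,t\,\bigl(v_{1,n}-g(v_1')\bigr)\ \le\ \frac{1+\frac{1}{2k}}{1-\frac{1}{2k}}\;t\,\bigl(F(v_1)-F(v_0)\bigr),
\]
so the lemma holds with, say, $k'=k$ and $M_{k'}=\tfrac{2k+1}{2k-1}\ (\le 3)$.

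The step I expect to require genuine care is the geometric set-up of the first two steps, not the final computation. One has to check that on $B_{r_k}(v_0)$ the level set $L_{v_0}$ really is a graph over $\{v_n=0\}$ whose domain comfortably contains the projections $v_1'$ and $tv_1'$, and that the vertical segments $\{(v',s):s\in[g(v'),v_n]\}$ used in the integration remain inside $B_{r_k}(v_0)$ --- hence inside $Y^*_{x_0}$, where $\nabla F$ and Lemma~\ref{Lem : Lip grad F} are available. This forces one to replace $r_k$ by a fixed fraction of itself in the statement, which is harmless for the way the lemma is used afterwards (through Lemma~\ref{local QQconv implies global} and Lemma~\ref{Lem : QQconv on half ball to global}); everything else is routine bookkeeping with the factor $\tfrac{1}{2k}$.
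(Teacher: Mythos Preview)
Your argument is correct (with the caveat you already flag about shrinking $r_k$ by a fixed factor so that the vertical segments and the graph domain stay inside the ball), and it is in spirit the same as the paper's: both exploit quasi-convexity to write $L_{v_0}$ locally as the graph of a concave function, then combine the graph inequality $g(tv')\ge t\,g(v')$ with two-sided bounds on a directional derivative of $F$. The genuine difference is the choice of direction. You project along $e_n=\nabla F(v_0)/|\nabla F(v_0)|$, which gives the cleanest possible bounds $\partial_nF\in[\rho_0(1-\tfrac{1}{2k}),\rho_0(1+\tfrac{1}{2k})]$, avoids the case split $v_1\in\cone{k}{v_0}$ versus $v_1\notin\cone{k}{v_0}$, and yields the sharper constant $M_{k'}=\tfrac{2k+1}{2k-1}\le 3$. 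The paper instead first disposes of the cone case via Lemma~\ref{Lem : QQconv on sec cone}, and for $v_1\notin\cone{k}{v_0}$ projects along a direction $\nu$ joining $v_1$ to a point $u_1\in L_{v_0}$ chosen inside the auxiliary cone $\icone{k'}{v_0}{v_1}$ (existence via Remark~\ref{Remark : icone}), obtaining $M_{k'}=2k'$. Your route is shorter for this interior lemma; the paper's more flexible choice of $\nu$ is what pays off later in Section~\ref{NearBdy}, where near $\partial Y^*_{x_0}$ one cannot always slide in the $\nabla F(v_0)$ direction and must instead pick $\nu$ in the intersection of a gradient cone with the Lipschitz boundary cone of Lemma~\ref{Lem : uniform Lip boundary}.
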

\begin{proof}
Note that by Lemma \ref{Lem : QQconv on sec cone}, we have the result when $v_1 \in \cone{k}{v_0}$. Let $v_1 \in B^+_{r_k}(v_0) \setminus \cone{k}{v_0}$ and consider the cone $\icone{k'}{v_0}{v_1}$(recall \eqref{notation : icone}). By Remark \ref{Remark : icone}, we have that $\icone{k'}{v_0}{v_1} \cap L_{v_0} \cap B_{r_k}(v_0) \neq \emptyset$. Let $ u_1 \in \icone{k'}{v_0}{v_1} \cap L_{v_0} \cap B_{r_k}(v_0)$ and let $\nu = (v_1 - u_1)/|v_1 - u_1|$. We claim that $v_t - s_t \nu \in L_{v_0}$ for some $s_t $. In fact, taking $s = t|v_1 - u_1|$, we get
\begin{displaymath}
v_t - s \nu = tu_1 + (1-t)v_0 \in SL_{v_0}.
\end{displaymath}
On the other hand, $v_t \in B^+_{r_k}(v_0)$ and \eqref{eqn: half ball in suplv} yield $F(v_t) \geq F(v_0)$. Therefore, by the intermediate value theorem, $v_t - s_t \nu \in L_{v_0}$ for some $s_t \in [0,  t|v_1 - u_1|]$. Now, up to an isometry, we can set $\nu = - e_n$, $v_0 = 0$, and $v_1 = ae_1 + be_n$ for some $a,b \in \R$, $a>0$. Then, noting that $\nu$ is not parallel to $v_0 - v_1$ by our choice of $v_1$, we can view the set $\{ v_t - s_t \nu | t \in [0,1] \}$ as a graph of a function $g$ on $[0,ae_1]$. Moreover, the convexity of $SL_{v_0}$ implies that the epigraph of $g$ is convex i.e. $g$ is a convex function. Note that $s_t = g(at) - bt$ so that $s_t$ is also a convex function of $t$ on $[0,1]$. Then, using $s_0 = 0$, we obtain
\begin{equation}
\label{Lem : local QQconv / convex parametrization}
|v_t - u_t| = s_t \leq t s_1 = t | v_1 - u_1 |
\end{equation} 
where $u_t = v_t - s_t \nu \in L_{v_0}$. Moreover, recalling Remark \ref{Remark : comparable cone}, we obtain the following:
\begin{align*}
\label{Lem : local QQconv / est left}
F(v_t) - F(v_0) & = F(v_t) - F(u_t) \\
& = \int_0^{s_t} \langle \nabla F ( u_t + s \nu) , \nu \rangle ds  \leq s_t \frac{2k+1}{2k} | \nabla F(v_0) |, \numberthis
\end{align*}
and
\begin{align*}
\label{Lem : local QQconv / est right}
F(v_1) - F(v_0) & = F(v_1) - F(u_1) \\
& = \int_0^{s_1} \langle \nabla F(u_1 + s \nu), \nu \rangle ds  \geq s_1 \left( \frac{1}{k'} - \frac{1}{2k} \right) | \nabla F(v_0) |. \numberthis
\end{align*}
We combine \eqref{Lem : local QQconv / convex parametrization}, \eqref{Lem : local QQconv / est left}, and \eqref{Lem : local QQconv / est right}, and we obtain
\begin{align*}
F(v_t) - F(v_0) & \leq 2 s_t | \nabla F(v_0) | \\
& \leq 2t s_1 | \nabla F(v_0) | \leq 2k' (F(v_1) - F(v_0))
\end{align*}
which gives the desired inequality with $M_{k'} = 2k'$.
\end{proof}

\begin{Rmk}
Lemma \ref{Lem : local QQconv} and the proof of Lemma \ref{local QQconv implies global} shows that if we have a set $Y' \subset Y$ such that $ \mathrm{dist}(Y', \partial Y) = d$, then the cost function is QQconv on $Y'$ with the constant $M \sim \frac{1}{d}$.
\end{Rmk}

\section{Near the boundary}
\label{NearBdy}
One problem of the argument that we used in the proof of Lemma \ref{Lem : local QQconv} is the following : if we fix $r_k$, the radius of the ball centered at $v_0$, then as we pick $v_0$ close to the boundary $\partial Y^*_{x_0}$, the ball $B_{r_k}(v_0)$ may intersect with the boundary $\partial Y^*_{x_0}$. Then the cone $\icone{k'}{v_0}{v_1}$ may not intersect with the level set $L_{v_0}$ and we cannot get the function $s_t$ in the proof of Lemma \ref{Lem : local QQconv}. 

To bypass this problem, we divide the segment $[v_0, v_1]$ into $m$ pieces, where we can use the idea of Lemma \ref{Lem : local QQconv} in each piece. More precisely, we use the convex function $\sigma$ on the piece that contains $v_t$ and obtain an estimate for $F(v_t) - F(v_{j/m})$ for some $j$. Then we combine the estimate with other estimates for $F(v_{i/m}) - F(v_{(i-1)/m})$ to obtain the $t$ factor in \eqref{QQconvex ineq 2}. While combining the estimates, we see that the number $m$, which depends on $v_0$ and $v_1$, disappears, and we obtain the constant $M$ that is uniform over $x_0, x_1 \in X$ and $v_0, v_1 \in Y^*_{x_0}$.

\begin{Lem}
    \label{Lem : subdivision}
    Let $v_0, v_1 \in \Int{Y_{x_0}^*}$ and suppose $v_1 \in \Int{B^+_{r_k/2}(v_0)}$. Then there exists a positive number $m$ and a positive real number $\epsilon$ that depends on $|v_1 - v_0|$ and $\dist{[v_0, v_1]}{\partial Y^*_{x_0}}$ such that
    \begin{equation}
    \label{eqn: m epsilon}
        \frac{\epsilon}{2} \leq \frac{|v_1 - v_0|}{m} \leq \epsilon
    \end{equation}
    and, for any $0 \leq i \leq m-1$ and for any $t \in [\frac{i}{m}, \frac{i+1}{m}]$, there exists $u_t$ such that 
    \begin{equation*}
        u_t \in \{v_t-s\nabla F(v_0) | s \geq 0 \} \cap L_{v_{i/m}} \cap B_\epsilon (v_{i/m}).
    \end{equation*}
\end{Lem}
\begin{proof}
    Let $\delta = \dist{[v_0, v_1]}{\partial Y^*_{x_0}}$, and fix $\epsilon$ such that
    \begin{equation*}
        \epsilon < \min \left\{ \delta, \frac{1}{2} |v_1 - v_0| \right\}.
    \end{equation*}
    Note that $\epsilon < \frac{1}{2}r_k$. Let $m$ be a positive integer satisfying 
    \begin{equation}
    \label{eqn: choose m}
        \frac{|v_1 - v_0|}{\epsilon \sqrt{1-\frac{1}{k^2}}} < m < \frac{2|v_1 - v_0|}{\epsilon}.
    \end{equation}
    Observe that this readily implies \eqref{eqn: m epsilon}. Using the argument from Remark \ref{Remark : icone}, we obtain
    \begin{equation}
    \label{eqn: cone in SL}
        \begin{aligned}
            & SL_{v_{i/m}} \cap B_{\epsilon}(v_{i/m}) \\
            =& \left( \bigcap_{v'} \left\{ v | \langle v-v', \nabla F(v') \rangle \leq 0 \right\}\right) \cap B_\epsilon (v_{i/m}) \\
            \supset& \left( \bigcap_{v'} \left\{ v | \langle v-v_{i/m}, \nabla F(v') \rangle \leq 0 \right\}\right) \cap B_\epsilon (v_{i/m}) \\
            \supset& \left\{ v| \langle v-v_{i/m} , u \rangle \leq 0, \forall u \in B_{\frac{\rho_0}{2k}}(\nabla F(v_0))\right\} \cap B_\epsilon(v_{i/m}),
        \end{aligned}
    \end{equation}
    where $\displaystyle \bigcap_{v'}$ means the intersection over $v' \in L_{v_0}\cap B_{r_k}(v_{i/m})$, while we have used $B_\epsilon (v_{i/m}) \subset B_{r_k}(v_0)$. Up to an isometry, we can set 
    \begin{equation*}
        \nabla F(v_0) = \rho_0e_n, \quad v_0 = 0, \quad v_1 = ae_1 + be_n.
    \end{equation*}
    Then \eqref{eqn: cone in SL} implies
    \begin{equation*}
        \begin{aligned}
            \mathcal{D}_i & : = \left\{ v_{i/m} - \frac{\epsilon}{k}e_n + \epsilon r\hat{e} \Bigg| \langle \hat{e} , e_n \rangle = 0 , \| \hat{e} \| = 1,\ 0 \leq r \leq \sqrt{1-\frac{1}{k^2}}\right\} \\
            & \subset SL_{v_{i/m}}.
        \end{aligned}
    \end{equation*}
    Then, for any $t \in [\frac{i}{m}, \frac{i+1}{m}]$, by \eqref{eqn: choose m}, we have
    \begin{equation*}
        \begin{aligned}
            w_t & := v_t - \left( \left( t-\frac{i}{m}\right)b + \frac{\epsilon}{k} \right)e_n \\
            & = v_{i/m} - \frac{\epsilon}{k}e_n + \left(t-\frac{i}{m} \right)ae_1\\
            & \in \mathcal{D}_i \subset SL_{v_{i/m}}
        \end{aligned}
    \end{equation*}
    ($c$DomConv) implies 
    \begin{equation*}
        [v_t, w_t] \subset Y^*_{x_0}.
    \end{equation*}
    In addition, $w_t \in SL_{v_{i/m}}$ shows $F(w_t) \leq F(v_{i/m})$, while Remark \ref{Rmk : increasing in half ball} yields $F(v_t) \geq F(v_{i/m})$. Therefore, by the intermediate value theorem, we obtain $u_t \in [v_t, w_t]$ such that
    \begin{equation*}
        F(u_t) = F(v_{i/m}), \quad \textrm{i.e.} \quad u_t \in L_{v_{i/m}}. 
    \end{equation*}
    Note that the form of $w_t$ yields that $u_t \in \{v_t - s \nabla F(v_0)| s\geq 0 \}$. Finally, Noting that $\mathcal{D}_i \subset B_\epsilon(v_{i/m})$, we have $w_t \in B_\epsilon (v_{i/m})$. Also, $|v_t - v_{i/m}| \leq \frac{1}{m} |v_1 - v_0 | < \epsilon$, so that $v_t \in B_\epsilon(v_{i/m})$. Thus, convexity of $B_\epsilon(v_{i/m})$ yields $u_t \in B_\epsilon(v_{i/m})$. this concludes the proof of the lemma.
\end{proof}

\begin{Lem}
\label{Lem : Local QQconv 2}
    Let $v_0, v_1 \in \Int{Y^*_{x_0}}$ and suppose $v_1 \in \Int{B^+_{r_k/2}(v_0)}$. Then we have
    \begin{equation*}
        F(v_t) -F(v_0) \leq M_k t(F(v_1) - F(v_0))
    \end{equation*}
    for some constant $M_k$ that only depends on $k$.
\end{Lem}
\begin{proof}
    Let 
    \begin{equation*}
        u_{i/m} \in \left\{ v_{i/m}-s\nabla F(v_0) | s \geq 0 \right\} \cap L_{v_{(i-1)/m}} \cap B_\epsilon (v_{i/m}),
    \end{equation*}
    where $m$ and $\epsilon$ are from Lemma \ref{Lem : subdivision}. Then we compute
    \begin{equation*}
        \begin{aligned}
            F(v_{i/m}) - F(v_{(i-1)/m}) & = F(v_{i/m}) - F(u_{i/m}) \\
            & = \int_0^1 \langle \nabla F( s v_{i/m} + (1-s)u_{i/m}) , v_{i/m} - u_{i/m} \rangle  ds \\
            & = \int_0^1 \left\langle \nabla F( s v_{i/m} + (1-s)u_{i/m}), \frac{\nabla F(v_0)}{| \nabla F(v_0) |} \right\rangle | v_{i/m} - u_{i/m}| ds \\
            & \geq \left( 1 - \frac{1}{2k} \right) | \nabla F(v_0 ) | \times \frac{\epsilon}{2},
        \end{aligned}
    \end{equation*}
    where we have used \eqref{lower comparability} with $k'=1$ and \eqref{eqn: m epsilon} in the above inequality. Hence
    \begin{equation}
    \label{eqn: Fv1-Fv0 subdiv}
        \begin{aligned}
            F(v_1) - F(v_0) & = \sum_{i=1}^m (F(v_{i/m}) - F(v_{(i-1)/m})) \\
            & \geq \frac{1}{2} \left( 1 - \frac{1}{2k} \right) | \nabla F(v_0 ) | m\epsilon \\
            & \geq \frac{1}{2} \left( 1 - \frac{1}{2k} \right) | \nabla F(v_0 ) | |v_1 - v_0|.
        \end{aligned}
    \end{equation}
    Next, we fix $t \in [\frac{j-1}{m}, \frac{j}{m}]$. Then
    \begin{equation*}
        \begin{aligned}
            F(v_t) - F(v_0) & = (F(v_t) - F(v_{j/m})) + \sum_{i=1}^j (F(v_{i/m}) -F(v_{(i-1)/m}) ) \\
            & =: I_1 + I_2.
        \end{aligned}
    \end{equation*}
    We estimate $I_2$ first. 
    \begin{equation*}
        \begin{aligned}
            F(v_{i/m})- F(v_{(i-1)/m}) & = \int_0^1 \langle \nabla F (s v_{i/m} + (s-1)v_{(i-1)/m}), v_{i/m} - v_{(i-1)/m} \rangle ds \\
            & \leq \left( 1 + \frac{1}{2k} \right) | \nabla F(v_0)| \times \frac{|v_1 -v_0|}{m},
        \end{aligned}
    \end{equation*}
    where we have used \eqref{grad in small ball} in the above inequality. Hence, we obtain
    \begin{equation}
    \label{eqn: I2}
        \begin{aligned}
            I_2 & = \sum_{i=1}^j (F(v_{i/m}) - F(v_{(i-1)/m})) \\
            & \leq \sum_{i=1}^j \left( 1 + \frac{1}{2k} \right) | \nabla F(v_0)| \frac{|v_1 -v_0|}{m} \\
            & = \left( 1 + \frac{1}{2k} \right)\frac{j}{m} | \nabla F(v_0)|  |v_1 -v_0|
        \end{aligned}
    \end{equation}
    On the other hand, to estimate $I_1$, for $\tau \in [0, \frac{1}{m}]$, let
    \begin{equation*}
        u_{j/m + \tau} \in \{ v_{j/m+\tau} - s \nabla F(v_0) | s \geq 0\} \cap L_{v_{(j-1)/m}} \cap B_\epsilon (v_{(j-1)/m})
    \end{equation*}
    be from Lemma \ref{Lem : subdivision}. Then the convexity of the sublevel set $SL_{v_{(j-1)/m}}$ yields that the map
    \begin{equation*}
        \tau \mapsto | v_{j/m+\tau} - u_{i/m+\tau}| =: \sigma(\tau)
    \end{equation*}
    is convex on $(0,\frac{1}{m}]$. Then, defining $s_0 = \lim_{\tau \to 0}s_\tau = 0$, we obtain
    \begin{equation*}
        \sigma(\tau) \leq m\tau\sigma\left(\frac{1}{m}\right)+ (1-m\tau)\sigma(0) = m\tau \sigma\left(\frac{1}{m}\right).
    \end{equation*}
    Thus, using the above inequality with $\tau = t - j/m$, we obtain
    \begin{equation*}
        \begin{aligned}
            I_1 & = F(v_t) - F(u_t) \\
            & = \int_0^1 \langle \nabla F(sv_t + (1-s) u_t), v_t - u_t \rangle ds \\
            & = \int_0^1 \left\langle \nabla F(sv_t + (1-s) u_t) , \frac{\nabla F(v_0)}{|\nabla F(v_0)|}\right\rangle \sigma(\tau) ds \\
            & \leq \left( 1 + \frac{1}{2k} \right) | \nabla F(v_0) | \times m\tau \sigma(\frac{1}{m}) \\
            & = \left( 1 + \frac{1}{2k} \right) | \nabla F(v_0) | m \left( t - \frac{j}{m} \right) \left| v_{j/m} - u_{j/m}\right|.
        \end{aligned}
    \end{equation*}
    Noting that $v_{j/m}, u_{j/m} \in B_\epsilon(v_{(j-1)/m})$, we have 
    \begin{equation*}
        \left| v_{j/m} - u_{j/m}\right| \leq 2\epsilon \leq \frac{4}{m}|v_1 - v_0|,
    \end{equation*}
    and therefore
    \begin{equation*}
        \begin{aligned}
            I_1 \leq 4\left( 1 + \frac{1}{2k} \right) \left( t - \frac{j}{m} \right) | \nabla F(v_0) | |v_1 - v_0|.
        \end{aligned}
    \end{equation*}
    Combining the above estimate with \eqref{eqn: I2}, we obtain
    \begin{equation*}
        \begin{aligned}
            F(v_t) - F(v_0) & = I_1 + I_2 \\
            & \leq  4\left( 1 + \frac{1}{2k} \right) t | \nabla F(v_0) | |v_1 - v_0|.
        \end{aligned}
    \end{equation*}
    Using \eqref{eqn: Fv1-Fv0 subdiv}, we end up with
    \begin{equation*}
        \begin{aligned}
            F(v_t) - F(v_0) & \leq 8\left( 1 + \frac{1}{2k} \right)\left( 1-\frac{1}{2k} \right)^{-1}(F(v_1) - F(v_0)) \\
            &= \frac{16k+8}{2k-1} t (F(v_1) - F(v_0)).
        \end{aligned}
    \end{equation*}
    This concludes the proof of the lemma.
\end{proof}

Now we give the proof of the main theorem.

\begin{proof}[Proof of the main theorem Theorem \ref{MainThm}]
Fix  a positive integer $k$. Lemma \ref{Lem : Local QQconv 2} yield that for any $v_0 \in \Int{Y^*_{x_0}}$ and for any $v_1 \in \Int{B^+_{r_k/2}(v_0)}$, we have
\begin{equation}
\label{eqn: QQconv Mk}
    F(v_t) -F(v_0) \leq M_k t(F(v_1) - F(v_0))
\end{equation}
where $M_k$ only depends on $k$. Then, for any point $v_0\in Y^*_{x_0}$ and $v_1 \in B^+_{r_k/2}(v_0)$, we can choose sequences $v^i_0 \in \Int{Y^*_{x_0}}$ and $v_1^i \in \Int{B^+_{r_k/2}(v_0^i)}$ that each sequence converges to $v_0$ and $v_1$ respectively as $i \to \infty$. Since, for each $(v_0^i,v_1^i)$, we have
\begin{equation*}
    F(v_t^i) -F(v_0^i) \leq M_k t(F(v_1^i) - F(v_0^i)),
\end{equation*}
where $v_t^i = tv_1^i + (1-t) v_0^i$. Taking limit $i \to \infty$, we obtain \eqref{eqn: QQconv Mk}. Then, by Lemma \ref{Lem : QQconv on half ball to global}, we conclude that $c$ is QQconv.
\end{proof}

\section{A remark on a counter example}
\label{counter ex}
To state the Loeper's condition and QQconv properly, we need at least $C^1$ regularity with the (Twisted) and (Twisted*) conditions due to $\cexp$ in the inequalities. In addition, (cDomConv) is needed to define the $c$-segment $[v_0, v_1]$. Moreover, without the $C^2$ regularity in the sense of the existence of the mixed Hessian $D^2_{xy}c = (D^2_{yx}c)^T$, we won't be able to define the (Non-degenerate) condition. Thus (Non-degenerate) condition is the only condition that can be removed from Theorem \ref{MainThm} without changing the other parts of the statement of the theorem. In this case, however, the theorem becomes false. i.e. there is a cost function $c$ that satisfies Loeper's condition, but not QQconv.

If we assume that (Non-degenerate) condition does not hold at some point, then, heuristically, from \eqref{grad F}, we can expect that $|\nabla F| = \infty$ at the point. Then, from \eqref{QQconvex ineq 2}, formally, we may have
\begin{equation*}
    F(v_1) - F(v_0) \geq \lim_{t \to \infty}\frac{1}{t}(F(v_t) - F(v_0)) = \langle \nabla F(v_0), v_1 - v_0 \rangle = \infty
\end{equation*}
for some $v_1$, which yields that QQconv cannot hold.

To build a concrete example, we use the cost function of the form
\begin{equation*}
    c(x,y) = \frac{1}{2}|x-y|^2 +\frac{1}{2}(f(x)-g(y))^2,
\end{equation*}
where $f$ and $g$ are convex functions. This cost function is known to satisfy $(A3w)$ condition when $f$ and $g$ are $C^2$ and $\langle \nabla f, \nabla g \rangle > -1$ (see \cite{MTW}). Hence, the cost functions of the above form also satisfies the Loeper's condition. We let 
\begin{equation*}
    f(x) = (a_1)^2, \quad g(y) = (b_1)^2
\end{equation*}
where $x=(a_1, a_2)$ and $y=(b_1, b_2)$ are the points in $\R^2$. Then we have
\begin{equation}
\label{eqn: counter ex Dc}
    \begin{aligned}
        &-D_x c(x, y) = y-x + 2((b_1)^2-(a_1)^2)a_1e_1, \\
        &-D_y c(x, y) = x-y + 2((a_1)^2-(b_1)^2)b_1e_1,
    \end{aligned}
\end{equation}
which are injective in $b_1\leq \frac{-1}{4a_1}$ for fixed $x$ and $a_1 \geq \frac{-1}{4b_1}$ for fixed $y$ respectively. Note also that they are injective when $a_1=0$ and $b_1=0$ respectively. We let 
\begin{equation*}
    X = \left[ -\frac{1}{2}, 0 \right]\times [0,1], \quad Y = \left[ 0, \frac{1}{2} \right]\times [0,1].
\end{equation*}
Then the cost function $c:X\times Y \to \R$ satisfies (Twisted) and (Twisted*). Also, we can compute that, for $x\in X$ and $y \in Y$,
\begin{equation*}
    \begin{aligned}
        & Y_x^* = \left[ -a_1 -2(a_1)^3, \frac{1}{2}-\frac{1}{2}a_1 -2(a_1)^3 \right] \times \left[ -a_2, 1-a_2 \right], \\
        & X_y^* = \left[-\frac{1}{2}-\frac{1}{2}b_1 - 2(b_1)^3, -b_1-2(b_1)^3  \right] \times \left[ -b_2, 1 -b_2\right].
    \end{aligned}
\end{equation*}
Hence, $X$ and $Y$ satisfy ($c$DomConv) and ($c$DomConv*) (See \cite{MTW} for further discussion about the convexity of the domains). Moreover, for $(x,y) \in X \times Y$ and $ v = (\gamma_1, \gamma_2)  \in Y^*_x$, \eqref{eqn: counter ex Dc} implies that the $c$-exponential map is 
\begin{equation}
\label{eqn: counter ex cexp}
    \begin{aligned}
        \cexp(v) & = \left( \left( \frac{\gamma_1}{2a_1} + \left( a_1 + \frac{1}{4a_1}\right)^2 \right)^{\frac{1}{2}} - \frac{1}{4a_1}, \gamma_2+a_2 \right).
    \end{aligned}
\end{equation}
Since the cost function $c$ satisfies $(A3w)$ on the region where $\langle \nabla f, \nabla g \rangle > -1$, we have 
\begin{equation}
\label{eqn: counter ex Loeper}
    -c(x_1,y_t) +c(x_0, y_t) \leq \max\{-c(x_1,y_1) +c(x_0, y_1), -c(x_1,y_0) +c(x_0, y_0) \},
\end{equation}
where $x_i=(a_1^i, a_2^i) \in X$ and $y_i = \cexpz(v_i)$ for some $v_i$ such that
\begin{equation*}
    v_i \in \left( -a_1^0 -2(a_1^0)^3, \frac{1}{2}-\frac{1}{2}a_1^0 -2(a_1^0)^3 \right] \times \left[ -a_2^0, 1-a_2^0 \right]'
\end{equation*}
and $y_t= \cexpz(v_t) = \cexpz(tv_1 + (1-t)v_0)$. Noting that \eqref{eqn: counter ex cexp} is stable under the limit $\gamma_1 \to \frac{1}{2}-\frac{1}{2}a_1^0 -2(a_1^0)^3$, we can take the limit in \eqref{eqn: counter ex Loeper} and conclude that the same inequality holds even if we choose $v_i \in Y^*_{x_0}$ that has the first coordinate $\frac{1}{2}-\frac{1}{2}a_1^0 -2(a_1^0)^3$. i.e. $y_i$ has the first coordinate $\frac{1}{2}$. Observe that, if we choose $x_0 = -\frac{1}{2}e_1$ and $y_0=\frac{1}{2}e_1$, where $e_1 = (1,0)$, the previous argument asserts that \eqref{eqn: counter ex Loeper} holds while $\langle \nabla f(x_0), \nabla g(y_0) \rangle = -1$. In particular, Loeper's property holds at the points where the cost function does not satisfy (Non-degenerate) condition. Note also that the MTW tensor is not well-defined at this point. \\
We choose
\begin{equation*}
    \begin{aligned}
        x_0 &= - \frac{1}{2} e_1 ,  &x_1 = 0, \\
        y_0 &= \frac{1}{2} e_1 ,  &y_1 = 0,
    \end{aligned}
\end{equation*}
and let $v_i \in Y^*_{x_0}$ be such that $\cexpz(v_i) = y_i$. Then, the function $F$ defined in \eqref{Def : F} is
\begin{equation*}
    \begin{aligned}
        F(v) &= \langle \frac{1}{2}e_1, \cexpz(v) \rangle + \frac{1}{4} (b_1)^2 + R \\
        & = \frac{1}{2}b_1+ \frac{1}{4} (b_1)^2 + R 
    \end{aligned}
\end{equation*}
for some constant $R$ where $b_1$ is the first coordinate of $\cexpz(v)$. i.e., for $v=(\gamma_1, \gamma_2)$,
\begin{equation*}
    b_1 = (1-\gamma_1)^{\frac{1}{2}}+\frac{1}{2}.
\end{equation*}
Also, we have
\begin{equation*}
    \begin{aligned}
        v_0 = e_1, \quad v_1 = \frac{3}{4}e_1.
    \end{aligned}
\end{equation*}
Then we compute
\begin{equation*}
    F(v_1) = \frac{3}{4} +R , \quad F(v_0) = \frac{5}{16}+R.
\end{equation*}
In particular, $F(v_1) - F(v_0) >0$. On the other hand, using the mean value theorem, we obtain
\begin{equation*}
    \begin{aligned}
        \lim_{t \to 0} \frac{F(v_t)-F(v_0)}{t} & = \lim_{t \to 0} \langle \nabla F(v_{s(t)}), -\frac{1}{4}e_1\rangle \\
        & = \lim_{t \to 0 } \left( \frac{1}{8s(t)^{\frac{1}{2}}} + \frac{s(t)^{\frac{1}{2}}+1}{16s(t)^{\frac{1}{2}}} \right) \\
        & \to \infty.
    \end{aligned}
\end{equation*}
Thus, for any fixed $M$, there exists $t \in (0,1)$ such that 
\begin{equation*}
    F(v_t)-F(v_0) > M(F(v_1) - F(v_0)).
\end{equation*}
This yields that the cost function $c$ does not satisfy QQconv.

\bibliographystyle{plain}

\bibliography{ref}

\end{document}